\documentclass{article}
\usepackage[utf8]{inputenc}
\usepackage{subfig}
\usepackage{graphicx} % Required for inserting images
\usepackage{caption,color}   % 24.62
\usepackage{amssymb}
\usepackage{amsthm}
\usepackage{amsmath}
\usepackage{epic}
\usepackage{setspace}
\usepackage{float}
\usepackage{natbib}
\usepackage{multirow}
\usepackage{hyperref}
\usepackage{xcolor}
\usepackage{marginnote}
\usepackage{amsthm}
\usepackage{authblk}

\newtheorem{thm}{Theorem}[section]

\newtheorem{lem}[thm]{Lemma}

\newtheorem{defi}[thm]{Definition}

\title{\textbf{Spectral moments and characteristic polynomials of vertex expansion hypergraphs of graphs}}

\author[1]{\quad Ge Lin\thanks{ linge0717@126.com}}
\author[1]{Changjiang Bu\thanks{ Corresponding author. buchangjiang@hrbeu.edu.cn}}

\affil[1]{School of Mathematical Sciences, Harbin Engineering University, Harbin 150001, PR China }

\date{}

\begin{document}

\maketitle

\begin{abstract}

The $s$-vertex expansion hypergraph $G^{[s]}$ is the $2s$-uniform hypergraph obtained by replacing each vertex of a graph $G$ with $s$ new vertices.
A closed walk in $G$ is called an $s$-multiple closed walk if the number of times it arrives at each vertex of $G$ is divisible by $s$.
In this paper,
we obtain an expression for the spectral moments of $G^{[s]}$ in terms of $s$-multiple closed walks in $G$.
Using these spectral moments,
we give the characteristic polynomial of $G^{[s]}$.

\end{abstract}

\noindent\textbf{Keywords:} Vertex expansion hypergraph, Adjacency tensor, Characteristic polynomial, Spectral moment, Closed walk

\noindent\textbf{MSC Classification:} 05C50, 05C65

\section{Introduction}

In spectral hypergraph theory,
determining the characteristic polynomial of a uniform hypergraph is a fundamental problem.
The characteristic polynomial of a uniform hypergraph is the resultant of a system of homogeneous polynomial equations \cite{qi2005eigenvalues}.
So far,
characteristic polynomials have been obtained for only a few classes of uniform hypergraphs,
including complete $3$-uniform hypergraphs in 2021 \cite{zheng2021complete},
the Fano plane in 2022 \cite{clark2022applications},
and uniform hypertrees \cite{li2024hypertree} and power hypergraphs \cite{chen2024spectra} in 2024.
Other results on the characteristic polynomials of uniform hypergraphs can be found in \cite{bao2020combinatorial,chen2021reduction,cooper2015computing,duan2023characteristic,zheng2020simplifying}.

%A useful tool for computing the characteristic polynomial of a uniform hypergraph is its spectral moments,
%which are the sums of powers of all its eigenvalues.
It is well known that determining the spectral moments of a graph is equivalent to determining its characteristic polynomial \cite{van2003graphs}.
The spectral moments of a graph are equal to the traces of its adjacency matrix \cite{cvetkovic1980spectra}.
%The traces of general tensors are defined by Morozov and Shakirov \cite{morozov2011analogue},
The spectral moments of a uniform hypergraph are also equal to the traces of its adjacency tensor \cite{hu2013determinants}.
In 2015,
Shao et al. \cite{shao2015some} gave some combinatorial formulas for the traces of general tensors,
and established some properties of the characteristic polynomial of $k$-uniform hypergraphs whose spectra are $k$-symmetric.
In 2021,
Clark and Cooper \cite{clark2021harary} generalized the Harary-Sachs theorem to uniform hypergraphs based on tensor traces.
In 2024,
Chen et al. \cite{chen2024spectra} used a trace formula of Shao et al. to derive an expression for the spectral moments of the power hypergraph of a graph in terms of parity-closed walks in the graph.
%Using these spectral moments and all eigenvalues of the power hypergraph given in \cite{CHEN2023205},
%Chen et al. \cite{chen2024spectra} further gave its characteristic polynomial.
The characteristic polynomial of the power hypergraph was further obtained in \cite{chen2024spectra} by using these spectral moments and all its eigenvalues given in \cite{CHEN2023205}.
As an application of spectral moments of power hypergraphs,
Chen et al. \cite{chen2025high} derived some high-order cospectral invariants of trees and used them to distinguish some cospectral trees.
This paper considers the vertex expansion hypergraph of a graph.

%The $s$-vertex expansion hypergraph $G^{[s]}$ is the $2s$-uniform hypergraph obtained by replacing each vertex of a graph $G$ with a set of size $s$ and preserving the adjacency relation.
The $s$-vertex expansion hypergraph of a graph $G$,
denoted by $G^{[s]}$,
is the $2s$-uniform hypergraph obtained by replacing each vertex of $G$ with $s$ new vertices.
Some spectral properties of $G^{[s]}$ have been studied.
In 2015,
Khan and Fan \cite{Fan2015spectral} proved that $G^{[s]}$ has the same spectral radius as $G$.
In 2016,
Khan et al. \cite{khan2016h} determined all $\mathrm{H}$-eigenvalues (i.e., real eigenvalues with real eigenvectors) of $G^{[s]}$ by using the eigenvalues of induced subgraphs of $G$.
In 2019,
Fan et al. \cite{fan2019spectral} characterized the spectral symmetry of $G^{[s]}$.
In \cite{lin2026eigenvalue},
we determined all eigenvalues of $G^{[s]}$ by using the eigenvalues of so-called $2s$-weighted induced subgraphs of $G$.

In this paper,
we obtain an expression for the spectral moments of $G^{[s]}$ in terms of $s$-multiple closed walks in $G$ (see Theorem \ref{dingli1}),
which are closed walks such that the number of times they arrive at each vertex of $G$ is divisible by $s$.
Using these spectral moments and the result in \cite{lin2026eigenvalue},
we determine the multiplicity of each eigenvalue of $G^{[s]}$,
thereby giving its characteristic polynomial (see Theorem \ref{dingli2}).

\section{Preliminaries}

In this section,
we introduce some definitions on tensors and hypergraphs,
as well as some notation for the spectral moments of hypergraphs.
Unless otherwise stated,
this notation is used throughout the paper.
We also provide some auxiliary lemmas required for the proofs in the next section.

A $k$-order $n$-dimensional complex tensor $A=(a_{i_{1}i_{2}\cdots i_{k}})$ is a multi-dimensional array with entries $a_{i_{1}i_{2}\cdots i_{k}}\in\mathbb{C}$ for all $i_{j}\in[n]:=\{1,2,\ldots,n\}$ and $j\in[k]$.
For a vector $\mathbf{x}=(x_{1},x_{2},\ldots,x_{n})^{\top}\in\mathbb{C}^{n}$,
let $\mathbf{x}^{[k-1]}=(x_{1}^{k-1},x_{2}^{k-1},\ldots,x_{n}^{k-1})^{\top}$.
Let $A\mathbf{x}^{k-1}$ be an $n$-dimensional vector with $\sum_{i_{2},\ldots,i_{k}=1}^{n}a_{ii_{2}\cdots i_{k}}x_{i_{2}}\cdots x_{i_{k}}$ as its $i$-th component.
If there exist $\lambda\in\mathbb{C}$ and a non-zero vector $\mathbf{x}$ such that
\begin{align*}
A\mathbf{x}^{k-1}=\lambda\mathbf{x}^{[k-1]},
\end{align*}
then $\lambda$ is called an \emph{eigenvalue} of $A$ and $\mathbf{x}$ is an \emph{eigenvector} of $A$ corresponding to $\lambda$ \cite{lim2005singular,qi2005eigenvalues}.
The \emph{characteristic polynomial} of $A$ is the resultant of the polynomial system $(\lambda\mathbf{x}^{[k-1]}-A\mathbf{x}^{k-1})$ \cite{qi2005eigenvalues}.
The \emph{spectrum} of $A$ is the multiset of the zeros of its characteristic polynomial.
For a positive integer $\ell$,
the spectrum of $A$ is called \emph{$\ell$-symmetric} if it is invariant under a rotation of an angle $\frac{2\pi}{\ell}$ in the complex plane \cite{cooper2012spectra}.

%Let $\ell$ be a positive integer.
%The spectrum $\mathrm{Spec}(A)$ of a tensor $A$ is called $\ell$-\emph{symmetric} if
%\begin{align}\label{eq1}
%\mathrm{Spec}(A)=\mathrm{e}^{\frac{2\pi}{\ell}\mathbf{i}}\mathrm{Spec}(A).
%\end{align}
%The maximum number $\ell$ such that \eqref{eq1} holds is called the \emph{cyclic index} of $A$ \cite{fan2019spectral}.
%Obviously,
%if the cyclic index of $A$ is equal to $\ell$,
%then the spectrum of $A$ is $\ell$-symmetric;
%and for any positive integer $t$ such that $t \mid \ell$,
%the spectrum of $A$ is also $t$-symmetric.

A hypergraph is called $k$-\emph{uniform} if each of its edges contains exactly $k$ vertices.
For a $k$-uniform hypergraph $H=(V(H),E(H))$ with $n$ vertices,
its \emph{adjacency tensor} $A_{H}=(a_{i_{1}i_{2}\cdots i_{k}})$ is a $k$-order $n$-dimensional tensor \cite{cooper2012spectra},
where
\begin{equation*}
a_{i_{1}i_{2}\cdots i_{k}}=\begin{cases}
\frac{1}{(k-1)!},&\text{if $\{i_{1},i_{2},\ldots,i_{k}\}\in E(H)$},\\
0,&\text{otherwise}.
\end{cases}
\end{equation*}
The eigenvalues (resp. spectrum and characteristic polynomial) of $A_{H}$ are called the \emph{eigenvalues} (resp. \emph{spectrum} and \emph{characteristic polynomial}) of $H$.
The sum of $d$-th powers of all eigenvalues of $H$ is called the $d$-th order \emph{spectral moment} of $H$,
denoted by $\mathrm{S}_{d}(H)$.
In order to describe a formula for $\mathrm{S}_{d}(H)$ (see Lemma \ref{yinli1}),
we introduce some related notation.

For a $k$-uniform hypergraph $H=(V(H),E(H))$ with $V(H)=[n]$ and a $k$-tuple $i v_1v_2\cdots v_{k-1}\in [n]^k$,
we interpret it as a rooted hyperedge of $H$ with root $i$ if $\{i, v_1,v_2,\ldots,v_{k-1}\} \in E(H)$.
When no confusion arises,
we use $i v_1v_2\cdots v_{k-1}$ to represent the hyperedge $\{i, v_1,v_2,\ldots,v_{k-1}\}$ rooted at $i$.
Let $\mathcal{F}_{d}=\{ (i_{1}\alpha_{1},\ldots,i_{d}\alpha_{d}): 1 \leq i_{1} \leq \cdots\leq i_{d} \leq n, \alpha_{1},\ldots,\alpha_{d}\in[n]^{k-1}, i_j\alpha_j \in E(H) \mbox{ for all $j \in [d]$} \}$.
Let $f=(i_{1}\alpha_{1},\ldots,i_{d}\alpha_{d})\in\mathcal{F}_{d}$,
where $i_{j}\alpha_{j}\in [n]^{k}$ for all $j \in [d]$.
It implies that $f$ consists of $d$ rooted hyperedges.
Construct a $k$-uniform hypergraph $H_{f}$ such that $V(H_{f})=\bigcup_{j=1}^{d}i_j\alpha_j$ and $E(H_{f})=\bigcup_{j=1}^{d}\{i_j\alpha_j\}$.
Obviously, $H_{f}$ is a subhypergraph of $H$.
For $i_{j}\alpha_{j}=i_{j}v_{1}\cdots v_{k-1}$,
let $\theta(i_{j}\alpha_{j})=\{(i_{j},v_{1}),\ldots,(i_{j},v_{k-1})\}$ be the set of arcs from $i_{j}$ to $v_{1},\ldots,v_{k-1}$.
%where $(v_{1},v_{2})$ denotes an arc from the vertex $v_{1}$ to the vertex $v_{2}$.
Construct a multi-digraph $D_{f}$ such that $V(D_{f})=\bigcup_{j=1}^{d}i_j\alpha_j$ and $E(D_f)=\bigcup_{j=1}^{d}\theta(i_{j}\alpha_{j})$.

Next,
consider a set of representatives of isomorphic connected subhypergraphs
\begin{align*}
\mathcal{H}_{d}=\left\{ \widehat{H}: \mbox{$H_{f}\cong\widehat{H}$ and $D_f$ is Eulerian for some $f\in\mathcal{F}_{d}$} \right\}.
\end{align*}
For $\widehat{H}\in\mathcal{H}_{d}$,
let $\mathcal{F}_{d}(\widehat{H})=\{f:\mbox{$f$ consists of $d$ rooted hyperedges of $\widehat{H}$ and $H_{f}\cong\widehat{H}$}\}$.
Denote the number of subhypergraphs of $H$ which are isomorphic to $\widehat{H}$ by $N_{H}(\widehat{H})$.
Let \[\mathfrak{D}_{d}(\widehat{H})=\left\{D:\mbox{$D_{f}\cong D$ is  Eulerian for some $f\in\mathcal{F}_{d}(\widehat{H})$}\right\}\] be the set of representatives of isomorphic Eulerian multi-digraphs.
Using the trace formula of Shao et al. \cite{shao2015some},
Chen et al. \cite{chen2024spectra} gave the following expression for the spectral moments of $H$ in terms of the numbers of connected subhypergraphs of $H$.

\begin{lem}\citep[(2.3) and Lemma 2.3]{chen2024spectra}\label{yinli1}
The $d$-th order spectral moment of a $k$-uniform hypergraph $H$ with $n$ vertices is
\begin{align}\label{eq2}
\mathrm{S}_{d}(H)=(k-1)^{n-1}\sum_{\widehat{H}\in\mathcal{H}_{d}}c_{d}(\widehat{H})N_{H}(\widehat{H}).
\end{align}
The $d$-th order spectral moment coefficient of the $k$-uniform hypergraph $\widehat{H}$ is
\begin{align}\label{eq3}
c_{d}(\widehat{H})=d(k-1)((k-1)!)^{-d}\sum_{D\in\mathfrak{D}_{d}(\widehat{H})}\frac{\left|\{f:\mbox{$f\in\mathcal{F}_{d}(\widehat{H})$ and $D_{f}\cong D$}\}\right|t(D)}{\prod_{v\in V(D)}\mathrm{deg}_{D}^{+}(v)},
\end{align}
where $\mathrm{deg}_{D}^{+}(v)$ is the out-degree of $v \in V(D)$ and $t(D)$ is the number of spanning trees of $D$.
\end{lem}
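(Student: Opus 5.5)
The plan is to derive Lemma \ref{yinli1} directly from the trace formula of Shao et al. \cite{shao2015some}, combined with the fact from \cite{hu2013determinants} that $\mathrm{S}_{d}(H)=\mathrm{Tr}_{d}(A_{H})$. I recall Shao's formula in the form
\begin{align*}
\mathrm{Tr}_{d}(A)=(k-1)^{n-1}\sum_{f\in\mathcal{F}_{d}'}\frac{b(f)}{c(f)}\,\pi_{f}(A)\,|W(f)|,
\end{align*}
stated up to its exact normalisation, which I will take from \cite{shao2015some}. Here:
\begin{itemize}
\item $\mathcal{F}_{d}'$ ranges over all sorted $d$-tuples of index strings $i_{j}\alpha_{j}$, not only those that are edges;
\item $\pi_{f}(A)=\prod_{j}a_{i_{j}\alpha_{j}}$;
\item $b(f)=\prod_{v}\mathrm{deg}^{+}_{D_f}(v)!$;
\item $c(f)$ is the product of the factorials of the multiplicities of repeated entries $i_{j}\alpha_{j}$ in $f$;
\item $W(f)$ is the set of Eulerian closed walks in $D_{f}$ that use arcs in the order prescribed by $f$.
\end{itemize}

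The first step restricts the sum. Since $a_{i\alpha}=0$ unless $\{i\}\cup\alpha\in E(H)$, only tuples in $\mathcal{F}_{d}$ contribute, and each of them has $\pi_{f}(A_{H})=((k-1)!)^{-d}$. Moreover $W(f)=\emptyset$ unless $D_{f}$ is Eulerian, because $D_{f}$ must be connected and balanced. Connectedness of $D_{f}$ forces $H_{f}$ to be connected, so only the $f$ with $H_{f}$ in one of the classes of $\mathcal{H}_{d}$ remain.

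The second step evaluates $\frac{b(f)}{c(f)}|W(f)|$ using the BEST theorem. The number of Eulerian circuits of $D_{f}$ from a fixed starting arc is $t(D_{f})\prod_{v}(\mathrm{deg}^{+}(v)-1)!$. The walks in $W(f)$ are Eulerian circuits of $D_{f}$ starting at the root $i_{1}$ and counted with labelled arcs. Passing between labelled and unlabelled arcs, and between a fixed starting arc and a fixed starting vertex, introduces exactly the factor $c(f)$ and a factor relating $d(k-1)$ (the total number of arcs) to $\mathrm{deg}^{+}(i_{1})$. After this bookkeeping the summand should collapse to
\begin{align*}
\frac{d(k-1)\,t(D_{f})}{\prod_{v}\mathrm{deg}^{+}_{D_{f}}(v)}\,((k-1)!)^{-d}.
\end{align*}
This is the step I expect to be the main obstacle. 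The factorials $b(f)$, $c(f)$, $(\mathrm{deg}^{+}-1)!$, together with the choice of starting arc, must cancel exactly. This requires a careful matching of Shao's definition of $W(f)$, which counts walks compatible with the order of the $i_{j}$, with the BEST count. I would handle it by fixing the first arc, counting rotations, and checking the result on a single edge with $d=1$ as a sanity test.

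The final step regroups the surviving tuples. Each $f$ with $H_{f}$ connected determines a subhypergraph of $H$ isomorphic to a unique $\widehat{H}\in\mathcal{H}_{d}$. Conversely, each copy of $\widehat{H}$ in $H$ carries exactly $|\mathcal{F}_{d}(\widehat{H})|$ such tuples, counted via an isomorphism; since the summand depends only on the isomorphism type of $D_{f}$, the count does not depend on the choice of isomorphism. Sorting these tuples further by the isomorphism class $D\in\mathfrak{D}_{d}(\widehat{H})$ of $D_{f}$ turns the sum into $(k-1)^{n-1}\sum_{\widehat{H}}N_{H}(\widehat{H})\,c_{d}(\widehat{H})$, with $c_{d}(\widehat{H})$ exactly as in \eqref{eq3}. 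This gives \eqref{eq2}.
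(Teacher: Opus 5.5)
\textbf{Verdict: right route, but the central computation is missing and, as set up, would fail.}

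Your route is the one behind the citation. The paper does not reprove this lemma; Chen et al.\ obtain it from the trace formula of Shao et al.\ by counting Eulerian closed walks and regrouping by isomorphism type. Your first step (restricting to $\mathcal{F}_d$ and Eulerian $D_f$) and your last step (regrouping) are fine.

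\textbf{The gap.} The middle step is the whole content of the lemma once the trace formula is granted. You leave it as ``should collapse'', and with the ingredients you wrote down it cannot collapse. In Shao et al.\ the ingredients are:
\begin{itemize}
\item $b(f)=\prod_{(u,v)}m_{D_f}(u,v)!$, the product of the factorials of the arc multiplicities of $D_f$;
\item $c(f)=\prod_{v}\mathrm{deg}^{+}_{D_f}(v)!$;
\item $W(f)$, the set of all closed walks (vertex sequences, with any starting position) whose arc multiset is $E(D_f)$.
\end{itemize}
With your $b(f)=\prod_v\mathrm{deg}^{+}_{D_f}(v)!$ in the numerator, no choice of $W(f)$ can work. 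Take a single hyperedge $\{1,2,3\}$ with $k=d=3$. Its characteristic polynomial is $\lambda^{3}(\lambda^{3}-1)^{3}$, so $\mathrm{S}_3=9$. The contributing tuples are the $8$ tuples in which each vertex is a root once, and each has $D_f$ the complete digraph on three vertices. Your $b(f)=8$, $c(f)=1$ and $\pi_f=\frac18$ make each summand equal to $|W(f)|$. You would therefore need $4\sum_f|W(f)|=9$, so $\sum_f|W(f)|=\frac94$, which is not an integer.

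Your proposed cancellation mechanism is also wrong in two places. First, passing between labelled and unlabelled arcs produces the arc-multiplicity factorials of $D_f$, not the multiplicities of repeated entries of $f$. For $k=3$, the entries $1\,23$ and $1\,32$ are distinct, yet together they give arcs $(1,2)$ and $(1,3)$ of multiplicity $2$ each. Second, the factor $d(k-1)$ does not come from a comparison with $\mathrm{deg}^{+}(i_1)$. Note also that a $d=1$ sanity test is vacuous, since $D_f$ is then never Eulerian; $d=k$ on one edge, as above, is the first informative case.

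\textbf{The fix.} With the correct definitions the step is one line from Lemma~\ref{yinli+}(b). That lemma already counts closed walks with arbitrary starting position, which is exactly where $|E(D_f)|=d(k-1)$ comes from. For Eulerian $D_f$,
\[
\frac{b(f)}{c(f)}\,|W(f)|=\frac{b(f)}{\prod_{v}\mathrm{deg}^{+}_{D_f}(v)!}\cdot\frac{d(k-1)}{b(f)}\,t(D_f)\prod_{v}\left(\mathrm{deg}^{+}_{D_f}(v)-1\right)!=\frac{d(k-1)\,t(D_f)}{\prod_{v}\mathrm{deg}^{+}_{D_f}(v)},
\]
and $W(f)=\emptyset$ otherwise. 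Multiply by $\pi_f(A_H)=((k-1)!)^{-d}$. Then regroup as in your last step, transporting the sorted-root convention along an isomorphism by re-sorting; this preserves the within-root order and the isomorphism type of $D_f$. This yields \eqref{eq2} and \eqref{eq3}. No fixed starting arc or rotation argument is needed.
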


The expressions for the number of Eulerian closed walks and spanning trees in an Eulerian multi-digraph are provided in Lemma \ref{yinli+}.
Although these are unrelated to hypergraphs,
they will be used in the next section to reduce the spectral moment coefficients of hypergraphs.

\begin{lem}\cite{Farrell2016multi,Stanley1999enumerative}\label{yinli+}
Let $D=(V(D),E(D))$ be an Eulerian multi-digraph.
Then
\begin{enumerate}

\item[(a)]\citep[Corollary 5.6.6]{Stanley1999enumerative} The number of spanning trees of $D$ is $$t(D)=\frac{1}{|V(D)|}\mu_{1}\cdots\mu_{|V(D)|-1},$$
where $\mu_{1},\cdots,\mu_{|V(D)|-1}$ are the nonzero eigenvalues of the Laplacian matrix of $D$.

\item[(b)]\citep[Theorem 6]{Farrell2016multi} The number of Eulerian closed walks in $D$ is $$\frac{|E(D)|}{b(D)}t(D){\prod_{v\in V(D)}(\mathrm{deg}_{D}^{+}(v)-1)!},$$
where $b(D)$ is the product of the factorials of the multiplicities of the arcs in $E(D)$ and $\mathrm{deg}_{D}^{+}(v)$ is the out-degree of $v \in V(D)$.

\end{enumerate}
\end{lem}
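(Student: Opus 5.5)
Both parts are classical, so I plan to reduce each to a standard theorem and spend the effort on matching the normalizations. Throughout I assume $D$ is connected, which is implicit in ``Eulerian''. Write $n=|V(D)|$ and $L=\mathrm{Deg}^{+}-A$ for the Laplacian of $D$, where $A_{uv}$ is the multiplicity of the arc $(u,v)$. I read a spanning tree of $D$ as an oriented spanning tree (arborescence) towards a fixed root.

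For (a) I will use Tutte's directed matrix-tree theorem: the number of oriented spanning trees rooted at $v$ equals the principal minor $\det L^{(v)}$ obtained by deleting row and column $v$. Since $D$ is Eulerian, every row sum and every column sum of $L$ vanishes, so $\mathbf{1}$ is both a left and a right null vector of $L$. Connectivity gives $\operatorname{rank}L=n-1$.
\begin{itemize}
\item First, from $L\,\mathrm{adj}(L)=\mathrm{adj}(L)\,L=0$ and $\operatorname{rank}L=n-1$, the adjugate has rank one with all rows and all columns in the span of $\mathbf{1}$. Hence $\mathrm{adj}(L)=cJ$ for a constant $c$.
\item Consequently all principal cofactors equal the same $c$. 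So the count of rooted spanning trees is independent of the root, and $t(D)=c$ is well defined.
\item Next, expand $\det(xI-L)=x\prod_{i=1}^{n-1}(x-\mu_i)$. The coefficient of $x$ is $(-1)^{n-1}\mu_1\cdots\mu_{n-1}$. On the other hand, it equals $(-1)^{n-1}$ times the sum of the $n$ principal $(n-1)$-minors, which is $(-1)^{n-1}nc$.
\end{itemize}
Comparing the two expressions gives $t(D)=\frac1n\mu_1\cdots\mu_{n-1}$. The $\mu_i$ are nonzero because the zero eigenvalue is simple.

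For (b) I will invoke the BEST theorem, making the arcs temporarily distinguishable by giving parallel arcs distinct labels. With labelled arcs, the number of Eulerian circuits up to cyclic rotation is $t(D)\prod_{v}(\deg^{+}_{D}(v)-1)!$. Here $t(D)$ is the root-independent arborescence count from (a).
\begin{itemize}
\item First I pass to closed walks with a distinguished starting point. Because all arcs carry distinct labels, the $|E(D)|$ rotations of a circuit are pairwise distinct sequences. This multiplies the count by $|E(D)|$.
\item Then I forget the labels, so that a closed walk is recorded as a sequence of vertices (equivalently, of unlabelled arcs). The group $\prod \mathrm{Sym}(\text{parallel class})$, of order $b(D)$, acts on labelled walks by permuting the labels within each class of parallel arcs.
\item I claim this action is free: a nontrivial permutation moves some label, so it changes the position at which that labelled arc occurs in the fixed linear sequence.
\item The orbits are exactly the unlabelled walks. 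Dividing by $b(D)$ then yields $\frac{|E(D)|}{b(D)}t(D)\prod_v(\deg^{+}_{D}(v)-1)!$.
\end{itemize}

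The main obstacle I expect is bookkeeping rather than depth. I have to make sure that the notion of ``closed walk'' used later in the paper (with a starting vertex, arcs unlabelled) matches this count exactly. In particular I must check the freeness claim for the label permutations, and that rotations are not double counted; the latter is fine because, after labelling, each arc occurs exactly once in the sequence. In (a) the delicate point is justifying that $\mathrm{adj}(L)$ is a multiple of $J$, which needs both the row-sum and column-sum conditions. That is precisely where the Eulerian hypothesis is used.
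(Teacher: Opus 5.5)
Your proposal is correct. The paper gives no proof of this lemma: part (a) is quoted from Stanley (Corollary 5.6.6) and part (b) from Farrell--Levine (Theorem 6). Your argument is a self-contained reconstruction of both, via Tutte's matrix-tree theorem and the BEST theorem.

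Your conventions are the ones the paper later relies on:
\begin{itemize}
\item the Laplacian $L=\mathrm{Deg}^{+}-A$;
\item arborescences oriented towards a root;
\item closed walks recorded as vertex sequences with a distinguished start and unlabelled arcs.
\end{itemize}
This is the reading that makes Lemma~\ref{yinli6} and the entries of $\mathrm{P}(2)$ in the example come out right. For instance, with two vertices joined by two arcs in each direction, your formula gives $\frac{4}{2!\,2!}\cdot 2=2$, matching the walks $1212$ and $2121$.

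Your adjugate argument for root-independence ($\mathrm{adj}(L)=cJ$ from the vanishing row and column sums) is purely linear-algebraic. It is often deduced instead as a corollary of the BEST theorem; either route is fine. The labelling, rotation and free-action bookkeeping in (b) is also correct.

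One step in (a) should be tightened. You write that the $\mu_i$ are nonzero ``because the zero eigenvalue is simple.'' But $\operatorname{rank}L=n-1$ only gives geometric simplicity of $0$, and that alone does not show the remaining roots of $\det(xI-L)/x$ are nonzero. Take the steps in this order:
\begin{itemize}
\item A connected balanced digraph is strongly connected, so a spanning arborescence towards each root exists. Hence $c\ge 1$, which also justifies $\operatorname{rank}L=n-1$.
\item Your coefficient comparison then gives $\mu_1\cdots\mu_{n-1}=nc\neq 0$.
\item Therefore $0$ is algebraically simple, and the $\mu_i$ are exactly the nonzero eigenvalues, as the statement asserts.
\end{itemize}
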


\section{The spectral moments of vertex expansion hypergraphs of graphs}

%In this section,
%we will use Lemma \ref{yinli1},
%the formula for the spectral moments of general hypergraphs,
%to give an expression for the spectral moments of the $s$-vertex expansion hypergraph of a graph in terms of $s$-visit closed walks.

%Fan et al. \cite{fan2019spectral} determined the cyclic index of the adjacency tensor of its $s$-vertex expansion hypergraph $G^{[s]}$ as follows.
%
%\begin{lem}\citep[Lemma 4.6 and Corollary 4.9]{fan2019spectral}\label{yinli2}
%Let $G$ be a graph and $s \ge 2$.
%If $G$ is bipartite, then the cyclic index of $A_{G^{[s]}}$ is $2s$; otherwise, it is $s$.
%\end{lem}
%By Lemma \ref{yinli2},
%we know that the spectrum of $G^{[s]}$ is $s$-symmetric for $s \geq 2$.
%And so the $d$-th order spectral moment $\mathrm{S}_{d}(G^{[s]})=0$ for $s \nmid d$.

For a graph $G=(V,E)$ and $s \ge 2$,
it is known that the spectrum of its $s$-vertex expansion hypergraph $G^{[s]}$ is $s$-symmetric \cite{fan2019spectral}.
So the $d$-th order spectral moment $\mathrm{S}_{d}(G^{[s]})=0$ for $s \nmid d$.
Note that the first $k-1$ order spectral moments of any $k$-uniform hypergraph are all $0$ \cite{cooper2012spectra}.
Therefore, from \eqref{eq2},
we have
\begin{align}\label{eq4}
\mathrm{S}_{d}(G^{[s]})=\begin{cases}
(2s-1)^{|V|s-1}\sum_{\widehat{G}\in\mathcal{G}_{d}}c_{d}(\widehat{G}^{[s]})N_{G}(\widehat{G}), &\text{if $d \geq 2s$ and $s \mid d$,}\\
0, &\text{otherwise,}
\end{cases}
\end{align}
where $\mathcal{G}_d$ is a set of representatives for the isomorphism classes of connected subgraphs $\widehat{G}$ of $G$ satisfying
$\mathfrak{D}_d(\widehat{G}^{[s]}) \neq \emptyset$.
%$\mathcal{G}_{d}=\{ \widehat{G}: \mbox{$\widehat{G}$ is a non-empty subgraph of $G$ and $\mathfrak{D}_{d}(\widehat{G}^{[s]}) \neq \emptyset$} \}$.

In the remainder of this section,
%for $s \geq 2$,
we will further reduce the spectral moment coefficient $c_{d}(\widehat{G}^{[s]})$ in \eqref{eq4} from \eqref{eq3} (see Lemma \ref{yinli8}), thereby obtaining an expression for $\mathrm{S}_{d}(G^{[s]})$ (see Theorem \ref{dingli1}).
In order to accomplish this,
we need to characterize the set of multi-digraphs $\mathfrak{D}_{d}(\widehat{G}^{[s]})$ involved in \eqref{eq3} (see Lemma \ref{yinli4}),
and this set determines the set of subgraphs $\mathcal{G}_{d}$ (see Lemma \ref{yinli5}) and other related parameters in \eqref{eq3} (see Lemma \ref{yinli7}).

Let $\widehat{G} = (\widehat{V},\widehat{E}) \in \mathcal{G}_{d}$.
For all $i \in \widehat{V}$,
let $\mathbf{V}_{i}$ denote a set with $s$ elements corresponding to $i$,
and all of those sets are pairwise disjoint.
The vertex set and hyperedge set of $\widehat{G}^{[s]}$ are $V(\widehat{G}^{[s]})=\bigcup_{i \in \widehat{V}}\mathbf{V}_{i}$ and $E(\widehat{G}^{[s]})=\{\mathbf{V}_{i}\cup\mathbf{V}_{j}: \{i,j\} \in \widehat{E}\}$, respectively.
Denote the multiplicity of the arc $(v,v')$ in a multi-digraph $D$ by $m_{D}(v,v')$.
We provide the following lemma to determine the multiplicities of arcs of multi-digraphs in $\mathfrak{D}_{d}(\widehat{G}^{[s]})$,
which will allow us to find all multi-digraphs in it.

\begin{lem}\label{yinli3}
Let $s \geq 2$ and $d \geq 2s$ such that $s \mid d$.
For $\widehat{G} = (\widehat{V},\widehat{E}) \in \mathcal{G}_{d}$,
let $D \in \mathfrak{D}_{d}(\widehat{G}^{[s]})$ and $f \in \mathcal{F}_{d}(\widehat{G}^{[s]})$ such that $D_{f} \cong D$.
For each $i \in \widehat{V}$ and any two distinct vertices $v,v' \in \mathbf{V}_{i}$,
we have
\begin{enumerate}

\item[(a)] $m_{D_f}(v,v')=m_{D_f}(v',v)$.

\item[(b)] $sm_{D_f}(v,v')=\sum_{\{i,j\} \in \widehat{E}}\sum_{u \in \mathbf{V}_{j}}m_{D_f}(u,v)=\sum_{\{i,j\} \in \widehat{E}}\sum_{u \in \mathbf{V}_{j}}m_{D_f}(u,v')$.

\end{enumerate}
\end{lem}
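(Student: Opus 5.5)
The plan is to read off every arc multiplicity of $D_f$ directly from the rooted hyperedges that make up $f$, and then to use only the Eulerian condition (in-degree equals out-degree at every vertex) of $D_f$. Lemma \ref{yinli1} and Lemma \ref{yinli+} are not needed here. This condition holds because $D\in\mathfrak{D}_{d}(\widehat{G}^{[s]})$ and $D_f\cong D$.

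\emph{Step 1: structure of the rooted hyperedges.} Every hyperedge of $\widehat{G}^{[s]}$ has the form $\mathbf{V}_i\cup\mathbf{V}_j$ with $\{i,j\}\in\widehat{E}$ and has exactly $2s$ vertices. Hence a rooted hyperedge $v\alpha$ with $v\in\mathbf{V}_i$ lists every other vertex of $\mathbf{V}_i\cup\mathbf{V}_j$ exactly once. So $\theta(v\alpha)$ contributes one arc from $v$ to each vertex of $\mathbf{V}_i\setminus\{v\}$ and one arc from $v$ to each vertex of $\mathbf{V}_j$.

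For $v\in\mathbf{V}_i$, let $r(v)$ be the number of members of $f$ rooted at $v$. For a neighbour $j$ of $i$, let $r_j(v)$ be the number of those members lying in the hyperedge $\mathbf{V}_i\cup\mathbf{V}_j$. Then
\[
m_{D_f}(v,v')=r(v)\quad (v'\in\mathbf{V}_i\setminus\{v\}),\qquad m_{D_f}(v,u)=r_j(v)\quad (u\in\mathbf{V}_j).
\]
Thus the multiplicity $m_{D_f}(u,v)$ for $u\in\mathbf{V}_j$ and $v\in\mathbf{V}_i$ does not depend on the choice of $v\in\mathbf{V}_i$. Consequently
\[
X_i:=\sum_{\{i,j\}\in\widehat{E}}\sum_{u\in\mathbf{V}_j}m_{D_f}(u,v)
\]
is the same for all $v\in\mathbf{V}_i$.

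\emph{Step 2: balance equations.} The out-degree of $v\in\mathbf{V}_i$ is $(2s-1)r(v)$. Its in-degree is $(R_i-r(v))+X_i$, where $R_i=\sum_{w\in\mathbf{V}_i}r(w)$. Setting these equal gives
\[
2s\,r(v)=R_i+X_i .
\]
The right-hand side is independent of $v\in\mathbf{V}_i$, so $r$ is constant on $\mathbf{V}_i$; call the common value $r$. Then $m_{D_f}(v,v')=r(v)=r(v')=m_{D_f}(v',v)$, which proves (a).

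Substituting $R_i=sr$ into the balance equation gives $2sr=sr+X_i$, that is, $s\,m_{D_f}(v,v')=sr=X_i$. Since $X_i$ is the same sum whether computed at $v$ or at $v'$, this is exactly (b).

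\emph{Main obstacle.} The only delicate point is the justification in Step 1 that $\alpha$ consists of distinct vertices covering the hyperedge exactly once. This follows because the condition $i_j\alpha_j\in E(H)$ requires the underlying set to be a $2s$-element hyperedge, while the tuple has only $2s$ entries. Everything after that is bookkeeping.
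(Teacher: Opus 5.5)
Your proof is correct and follows essentially the same route as the paper. Both read the arc multiplicities off the rooted hyperedges, observe that arcs into $\mathbf{V}_i$ from vertices other than $v,v'$ do not distinguish $v$ from $v'$, and conclude from the balance $\mathrm{deg}^{+}_{D_f}=\mathrm{deg}^{-}_{D_f}$. The paper does this by comparing the balance at $v$ with $\mathrm{deg}^{-}_{D_f}(v')=(2s-1)m_{D_f}(v',v)$, whereas your block-sum form $2s\,r(v)=R_i+X_i$ gives the constancy of $r$ on $\mathbf{V}_i$, and then (b), slightly more directly.
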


\begin{proof}
For each $i \in \widehat{V}$ and any two distinct vertices $v,v' \in \mathbf{V}_{i}$,
$v'$ occurs as a non-root vertex in every rooted hyperedge of $f$ rooted at $v$.
So the number of rooted hyperedges of $f$ rooted at $v$ is equal to $m_{D_f}(v,v')$.
From the construction of $D_f$,
we have $$\mathrm{deg}_{D_f}^{+}(v)=(2s-1)m_{D_f}(v,v').$$
Since $v$ occurs as a non-root vertex in rooted hyperedges of $f$ rooted at vertices in $\mathbf{V}_{i}\setminus\{v\}$,
as well as in rooted hyperedges of $f$ rooted at vertices in $\mathbf{V}_{j}$ for $\{i,j\}\in\widehat{E}$,
we have
\begin{align*}
\mathrm{deg}_{D_f}^{-}(v)&=\sum_{u \in \mathbf{V}_{i}\setminus\{v\}}m_{D_f}(u,v)+\sum_{\{i,j\} \in \widehat{E}}\sum_{u \in \mathbf{V}_{j}}m_{D_f}(u,v)\\
&=m_{D_f}(v',v)+\sum_{u \in \mathbf{V}_{i}\setminus\{v,v'\}}m_{D_f}(u,v)+\sum_{\{i,j\} \in \widehat{E}}\sum_{u \in \mathbf{V}_{j}}m_{D_f}(u,v).
\end{align*}
Note that $v$ and $v'$ occur together as non-root vertices in rooted hyperedges of $f$ rooted at vertices in $\mathbf{V}_{i}\setminus\{v,v'\}$ and $\mathbf{V}_{j}$ for $\{i,j\} \in \widehat{E}$.
It implies that $$\mathrm{deg}_{D_f}^{-}(v)=m_{D_f}(v',v)+\sum_{u \in \mathbf{V}_{i}\setminus\{v,v'\}}m_{D_f}(u,v')+\sum_{\{i,j\} \in \widehat{E}}\sum_{u \in \mathbf{V}_{j}}m_{D_f}(u,v').$$
Since $D_f \cong D$ and $D \in \mathfrak{D}_{d}(\widehat{G}^{[s]})$ is Eulerian,
we have $\mathrm{deg}_{D_f}^{+}(v)=\mathrm{deg}_{D_f}^{-}(v)$,
which yields that
\begin{align}\label{eq5}
2sm_{D_f}(v,v')
&=m_{D_f}(v',v)+\sum_{u \in \mathbf{V}_{i}\setminus\{v'\}}m_{D_f}(u,v')+\sum_{\{i,j\} \in \widehat{E}}\sum_{u \in \mathbf{V}_{j}}m_{D_f}(u,v')\\
&=m_{D_f}(v',v)+\mathrm{deg}_{D_f}^{-}(v').\notag
\end{align}
Since $\mathrm{deg}_{D_f}^{-}(v')=\mathrm{deg}_{D_f}^{+}(v')=(2s-1)m_{D_f}(v',v)$,
we get $m_{D_f}(v,v')=m_{D_f}(v',v)$,
that is, (a) holds.
It follows that $\sum_{u \in \mathbf{V}_{i}\setminus\{v'\}}m_{D_f}(u,v')=\sum_{u \in \mathbf{V}_{i}\setminus\{v'\}}m_{D_f}(v',u)\\=(s-1)m_{D_f}(v',v)$.
From \eqref{eq5} and (a),
we get$$sm_{D_f}(v,v')=\sum_{\{i,j\} \in \widehat{E}}\sum_{u \in \mathbf{V}_{j}}m_{D_f}(u,v')=\sum_{\{i,j\} \in \widehat{E}}\sum_{u \in \mathbf{V}_{j}}m_{D_f}(u,v),$$
that is, (b) holds.
\end{proof}

In the following,
we introduce a family of nonnegative integer matrices associated with $\widehat{G}$,
which will be used to determine the set of multi-digraphs $\mathfrak{D}_{d}(\widehat{G}^{[s]})$.

%\begin{defi}\label{dingyi1}
%Let $s \geq 2$ and $d \geq 2s$ such that $s \mid d$.
%For $\widehat{G} = (\widehat{V},\widehat{E}) \in \mathcal{G}_{d}$,
%let $\mathbf{Q}_{d,s}(\widehat{G})$ denote the set of all $|\widehat{V}| \times |\widehat{V}|$ nonnegative integer matrices $Q=(q_{ij})$ satisfying the following three conditions:
%\begin{enumerate}
%\item[(i)] For each $i \in \widehat{V}$, $q_{ii}=0$.
%
%\item[(ii)] For each $\{i,j\} \in \widehat{E}$, $q_{ij}+q_{ji}>0$, and for each $\{i,j\} \notin \widehat{E}$, $q_{ij}=q_{ji}=0$.
%
%\item[(iii)] $Q\mathbf{e}=Q^{\top}\mathbf{e}$, $\mathbf{e}^{\top}Q\mathbf{e}=d$, and $\frac{1}{s}Q\mathbf{e}$ is a positive integer vector,
%where $\mathbf{e}$ is a $|\widehat{V}|$-dimensional all-ones column vector.
%\end{enumerate}
%\end{defi}

\begin{defi}\label{dingyi1}
Let $s \geq 2$ and $d \geq 2s$ such that $s \mid d$.
%For $\widehat{G}=(\widehat{V},\widehat{E})\in\mathcal{G}_{d}$,
Let $Q=(q_{ij})$ be a $|\widehat{V}|\times|\widehat{V}|$ nonnegative integer matrix associated with the graph $\widehat{G}$ satisfying the following three conditions:
\begin{enumerate}
\item[(i)] For each $i\in\widehat{V}$, $q_{ii}=0$.

\item[(ii)] For each $\{i,j\}\in\widehat{E}$, $q_{ij}+q_{ji}>0$, and for each $\{i,j\}\notin\widehat{E}$, $q_{ij}=q_{ji}=0$.

\item[(iii)]
$Q\mathbf{e}=Q^{\top}\mathbf{e}$,
$\mathbf{e}^{\top}Q\mathbf{e}=d$,
and $\frac{1}{s}Q\mathbf{e}$ is a positive integer vector,
where $\mathbf{e}$ is a $|\widehat{V}|$-dimensional all-ones column vector.
\end{enumerate}
We use $\mathbf{Q}_{d,s}(\widehat{G})$ to denote the set of all such matrices $Q$.
\end{defi}

For $Q \in \mathbf{Q}_{d,s}(\widehat{G})$,
let $\mathcal{F}_{d}(\widehat{G}^{[s]},Q)$ denote the set of all $f \in \mathcal{F}_{d}(\widehat{G}^{[s]})$ satisfying the following two conditions:
\begin{enumerate}
%\item $f \in \mathfrak{F}_{d}(\widehat{G}^{[s]})$.

\item[(i)] For each $i \in \widehat{V}$ and each $v \in \mathbf{V}_{i}$,
the number of rooted hyperedges in $f$ rooted at $v$ is $\frac{1}{s}(Q\mathbf{e})_{i}$.

\item[(ii)] For each $\{i,j\} \in \widehat{E}$,
the number of rooted hyperedges $\mathbf{V}_{i} \cup \mathbf{V}_{j}$ in $f$ rooted at vertices in $\mathbf{V}_{i}$ (resp. $\mathbf{V}_{j}$) is $q_{ij}$ (resp. $q_{ji}$).
\end{enumerate}
We characterize the multi-digraphs in $\mathfrak{D}_{d}(\widehat{G}^{[s]})$ as follows.

\begin{lem}\label{yinli4}
Let $s \geq 2$ and $d \geq 2s$ such that $s \mid d$.
For $\widehat{G} = (\widehat{V},\widehat{E}) \in \mathcal{G}_{d}$,
we have
\begin{align*}
\mathfrak{D}_{d}(\widehat{G}^{[s]})=
\bigcup_{Q \in \mathbf{Q}_{d,s}(\widehat{G})}\left\{ D : \mbox{$D_{f} \cong D$ for some $f \in \mathcal{F}_{d}(\widehat{G}^{[s]},Q)$} \right\}.
\end{align*}
\end{lem}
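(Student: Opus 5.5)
We prove the two inclusions separately, working at the level of $f$. Recall that $D\in\mathfrak{D}_{d}(\widehat{G}^{[s]})$ means $D\cong D_f$ for some $f\in\mathcal{F}_{d}(\widehat{G}^{[s]})$ with $D_f$ Eulerian. So it suffices to show two things. First, every $f\in\mathcal{F}_{d}(\widehat{G}^{[s]})$ with $D_f$ Eulerian lies in $\mathcal{F}_{d}(\widehat{G}^{[s]},Q)$ for some $Q\in\mathbf{Q}_{d,s}(\widehat{G})$. Second, for every $Q\in\mathbf{Q}_{d,s}(\widehat{G})$ and every $f\in\mathcal{F}_{d}(\widehat{G}^{[s]},Q)$, the digraph $D_f$ is Eulerian.

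For the inclusion ``$\subseteq$'', take $f$ with $D_f$ Eulerian and define $q_{ij}$ to be the number of rooted hyperedges $\mathbf{V}_i\cup\mathbf{V}_j$ in $f$ rooted in $\mathbf{V}_i$ when $\{i,j\}\in\widehat{E}$, and $q_{ij}=0$ otherwise. Conditions (i) and (ii) of Definition \ref{dingyi1} are immediate. Indeed, $H_f\cong\widehat{G}^{[s]}$ forces every hyperedge $\mathbf{V}_i\cup\mathbf{V}_j$ to occur in $f$, which gives $q_{ij}+q_{ji}>0$. We also have $\mathbf{e}^\top Q\mathbf{e}=d$ because $f$ has $d$ rooted hyperedges. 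The key is the uniformity within each $\mathbf{V}_i$. By Lemma \ref{yinli3}(a),(b), the quantity $m_{D_f}(v,v')$ is the same for all ordered pairs of distinct $v,v'\in\mathbf{V}_i$. The proof of Lemma \ref{yinli3} shows that the number $r_v$ of rooted hyperedges rooted at $v$ equals $m_{D_f}(v,v')$, so $r_v=r_i$ is constant on $\mathbf{V}_i$. Then $(Q\mathbf{e})_i=\sum_j q_{ij}=\sum_{v\in\mathbf{V}_i}r_v=s\,r_i$. This gives integrality of $\frac1s Q\mathbf{e}$, and positivity follows because $\widehat{G}$ is connected and every edge is used, so each $v$ is a root at least once: a nonzero $m_{D_f}(u,v)$ and the Eulerian condition give positive out-degree. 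Finally, Lemma \ref{yinli3}(b) gives $s r_i=\sum_{\{i,j\}\in\widehat{E}}\sum_{u\in\mathbf{V}_j}m_{D_f}(u,v)$, and summing over $v\in\mathbf{V}_i$ turns the right side into $s\sum_j q_{ji}$, since each hyperedge rooted in $\mathbf{V}_j$ contributes one arc to every $v\in\mathbf{V}_i$. Hence $(Q\mathbf{e})_i=(Q^\top\mathbf{e})_i$, so $Q\in\mathbf{Q}_{d,s}(\widehat{G})$ and $f\in\mathcal{F}_{d}(\widehat{G}^{[s]},Q)$.

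For the inclusion ``$\supseteq$'', take $f\in\mathcal{F}_{d}(\widehat{G}^{[s]},Q)$; we must check that $D_f$ is Eulerian. We also need $H_f\cong\widehat{G}^{[s]}$, which holds by (ii) since $q_{ij}+q_{ji}>0$ on every edge (implicitly $f\in\mathcal{F}_{d}(\widehat{G}^{[s]})$ already). For $v\in\mathbf{V}_i$ put $r_i=\frac1s(Q\mathbf{e})_i$. Each rooted hyperedge has $2s-1$ non-root vertices, so $\deg^+(v)=(2s-1)r_i$. For the in-degree, $v$ receives one arc from each hyperedge rooted at the other $s-1$ vertices of $\mathbf{V}_i$, giving $(s-1)r_i$, and one arc from each hyperedge rooted in $\mathbf{V}_j$ with $\{i,j\}\in\widehat{E}$, giving $\sum_j q_{ji}=(Q^\top\mathbf{e})_i=(Q\mathbf{e})_i=s r_i$. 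Thus $\deg^-(v)=(2s-1)r_i=\deg^+(v)$. For connectivity, the underlying graph of $D_f$ is connected because $\widehat{G}$ is connected and every edge of $\widehat{G}$ is realized by a rooted hyperedge whose root points to all other $2s-1$ vertices. A balanced, weakly connected multi-digraph is Eulerian.

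The step I expect to need the most care is the first inclusion's derivation that the root counts are constant on each $\mathbf{V}_i$, together with $Q\mathbf{e}=Q^\top\mathbf{e}$. This requires correctly double-counting the arcs $m_{D_f}(u,v)$ from $\mathbf{V}_j$ into $\mathbf{V}_i$ via Lemma \ref{yinli3}. A second point to handle is the positivity of $\frac1s Q\mathbf{e}$, which relies on connectivity of $\widehat{G}$ and the Eulerian balance.
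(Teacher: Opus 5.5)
Your proposal is correct and follows essentially the same route as the paper: Lemma \ref{yinli3}(a) together with $m_{D_f}(v,v')$ equalling the number of hyperedges rooted at $v$ gives a constant root count on each $\mathbf{V}_i$, Lemma \ref{yinli3}(b) gives $Q\mathbf{e}=Q^\top\mathbf{e}$, and a direct in/out-degree count handles the reverse inclusion. The differences are minor. You obtain $\mathbf{e}^\top Q\mathbf{e}=d$ by counting rooted hyperedges directly rather than through the paper's arc count $|E(D_f)|=d(2s-1)$. Your summation over $v\in\mathbf{V}_i$ is unnecessary, since $\sum_{u\in\mathbf{V}_j}m_{D_f}(u,v)=q_{ji}$ already holds for a single $v$. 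Your explicit weak-connectivity check for $D_f$ fills a step the paper leaves implicit.
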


\begin{proof}
We prove this result by establishing the two inclusions.

For any $D \in \mathfrak{D}_{d}(\widehat{G}^{[s]})$,
let $f \in \mathcal{F}_{d}(\widehat{G}^{[s]})$ such that $D_{f} \cong D$.
We will show that there exists $Q \in \mathbf{Q}_{d,s}(\widehat{G})$ such that $f \in \mathcal{F}_{d}(\widehat{G}^{[s]},Q)$.
For each $i \in \widehat{V}$ and any two distinct vertices $v,v' \in \mathbf{V}_{i}$,
since $v'$ occurs as a non-root vertex in every rooted hyperedge of $f$ rooted at $v$,
the number of rooted hyperedges of $f$ rooted at $v$ is equal to $m_{D_f}(v,v')$.
It is known from Lemma \ref{yinli3}(a) that $m_{D_f}(v,v')=m_{D_f}(v',v)$.
Then the numbers of rooted hyperedges in $f$ rooted at each vertex in $\mathbf{V}_{i}$ are all equal,
and we denote it by $\mathrm{q}_{i}$.
Since $D_{f} \cong D$ is Eulerian,
we have $\mathrm{q}_{i} >0$.
Suppose that $q_{ij}$ (resp. $q_{ji}$) is the number of rooted hyperedges $\mathbf{V}_{i} \cup \mathbf{V}_{j}$ in $f$ rooted at vertices in $\mathbf{V}_{i}$ (resp. $\mathbf{V}_{j}$).
Since $H_{f} \cong \widehat{G}^{[s]}$,
we have $q_{ij}+q_{ji}>0$ for each $\{i,j\} \in \widehat{E}$ and $q_{ij}=q_{ji}=0$ for each $\{i,j\} \notin \widehat{E}$.
For each $i \in \widehat{V}$,
note that $\sum_{\{i,j\} \in \widehat{E}}q_{ij}=sm_{D_f}(v,v')=s\mathrm{q}_{i}$ and $\sum_{\{i,j\} \in \widehat{E}}q_{ji}=\sum_{\{i,j\} \in \widehat{E}}\sum_{u \in \mathbf{V}_{j}}m_{D_f}(u,v)=\sum_{\{i,j\} \in \widehat{E}}\sum_{u \in \mathbf{V}_{j}}m_{D_f}(u,v')$.
By Lemma \ref{yinli3}(b),
we have $\sum_{\{i,j\}\in\widehat{E}}q_{ji}=sm_{D_f}(v,v')$,
which yields that $$\sum_{\{i,j\} \in \widehat{E}}q_{ij}=\sum_{\{i,j\}\in\widehat{E}}q_{ji}=s\mathrm{q}_{i}.$$
%Denote $Q=(q_{ij})$ and $\mathbf{q}=(\mathrm{q}_{i})$.
Let $Q$ be the $|\widehat{V}| \times |\widehat{V}|$ matrix with entries $q_{ij}$,
and let $\mathbf{q}$ be the $|\widehat{V}|$-dimensional column vector with components $\mathrm{q}_{i}$.
Then we get $Q\mathbf{e}=Q^{\top}\mathbf{e}=s\mathbf{q}$.
The total multiplicity of arcs in $D_f$ is $$|E(D_f)|=\sum_{i \in \widehat{V}}s\mathrm{q}_{i}(2s-1)=(2s^{2}-s)\sum_{i \in \widehat{V}}\mathrm{q}_{i}.$$
Since $|E(D_f)|=d(2s-1)$,
we get $\sum_{i \in \widehat{V}}\mathrm{q}_{i}=\frac{d}{s}$,
that is, $\mathbf{e}^{\top}\mathbf{q}=\frac{1}{s}\mathbf{e}^{\top}Q\mathbf{e}=\frac{d}{s}$.
Thus,
we have $Q \in \mathbf{Q}_{d,s}(\widehat{G})$ and $f \in \mathcal{F}_{d}(\widehat{G}^{[s]},Q)$.
It implies that $$\mathfrak{D}_{d}(\widehat{G}^{[s]})\subseteq
\bigcup_{Q \in \mathbf{Q}_{d,s}(\widehat{G})}\left\{ D : \mbox{$D_{f} \cong D$ for some $f \in \mathcal{F}_{d}(\widehat{G}^{[s]},Q)$} \right\}.$$

For any $Q \in \mathbf{Q}_{d,s}(\widehat{G})$ and $f \in \mathcal{F}_{d}(\widehat{G}^{[s]},Q)$,
we next show that $D_{f}$ is Eulerian,
which implies that $D_{f} \cong D \in \mathfrak{D}_{d}(\widehat{G}^{[s]})$.
For each $i \in \widehat{V}$ and each $v \in \mathbf{V}_{i}$,
the number of rooted hyperedges in $f$ rooted at $v$ is $\frac{1}{s}(Q\mathbf{e})_{i}$.
Then we have $$\mathrm{deg}_{D_{f}}^{+}(v)=\frac{2s-1}{s}(Q\mathbf{e})_{i}.$$
On the other hand,
$v$ occurs as a non-root vertex in the rooted hyperedges of $f$ rooted at vertices in $\mathbf{V}_{i} \setminus \{v\}$,
with a total number of $\frac{s-1}{s}(Q\mathbf{e})_{i}$,
and also in the rooted hyperedges of $f$ rooted at vertices in $\mathbf{V}_{j}$ for $\{i,j\} \in \widehat{E}$,
with a total number of $q_{ji}$.
Then we have $$\mathrm{deg}_{D_{f}}^{-}(v)=\frac{s-1}{s}(Q\mathbf{e})_{i}+\sum_{\{i,j\} \in \widehat{E}}q_{ji}
=\frac{s-1}{s}(Q\mathbf{e})_{i}+\left(Q^{\top}\mathbf{e}\right)_{i}
=\frac{2s-1}{s}(Q\mathbf{e})_{i}.$$
Thus,
we get $\mathrm{deg}_{D_{f}}^{+}(v)=\mathrm{deg}_{D_{f}}^{-}(v)$,
that is, $D_{f}$ is Eulerian.
It implies that $$\mathfrak{D}_{d}(\widehat{G}^{[s]})\supseteq
\bigcup_{Q \in \mathbf{Q}_{d,s}(\widehat{G})}\left\{ D : \mbox{$D_{f} \cong D$ for some $f \in \mathcal{F}_{d}(\widehat{G}^{[s]},Q)$} \right\}.$$
\end{proof}

%Let $W$ be a closed walk in the graph $G=(V,E)$.
%For each $i \in V$,
%the visit number of $i$ in $W$ is defined as the number of times $W$ arrives at $i$.
%If the visit number of every vertex of $G$ in $W$ is divisible by $s$,
%then $W$ is called an $s$-visit closed walk of $G$.
%If $W$ traverses every edge of $G$ at least once,
%then $W$ is called a covering closed walk of $G$.
A closed walk $w$ in the graph $G$ is called an $s$-multiple closed walk if the number of times $w$ arrives at each vertex of $G$ is divisible by $s$.
If $w$ uses every edge of $G$ at least once,
then it is called covering.
We use $p_{d,s}(G)$ to denote the number of covering $s$-multiple closed walks of length $d$ in $G$.
Let $\mathcal{G}(d,s)$ be a set of representatives for the isomorphism classes of connected subgraphs $\widehat{G}$ of $G$ satisfying $p_{d,s}(\widehat{G})>0$.
Using Lemma \ref{yinli4},
we can give an intuitive description of the set $\mathcal{G}_d$ in \eqref{eq4}.

\begin{lem}\label{yinli5}
Let $s \geq 2$ and $d \geq 2s$ such that $s \mid d$.
Then we have $\mathcal{G}_{d}=\mathcal{G}(d,s)$.
\end{lem}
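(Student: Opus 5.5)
By Lemma \ref{yinli4}, for a connected subgraph $\widehat{G}$ of $G$ we have $\mathfrak{D}_{d}(\widehat{G}^{[s]})\neq\emptyset$ if and only if there is some $Q\in\mathbf{Q}_{d,s}(\widehat{G})$ with $\mathcal{F}_{d}(\widehat{G}^{[s]},Q)\neq\emptyset$. So the plan is to prove, for connected $\widehat{G}$, the equivalence
\[
\exists\, Q\in\mathbf{Q}_{d,s}(\widehat{G}) \text{ with } \mathcal{F}_{d}(\widehat{G}^{[s]},Q)\neq\emptyset
\quad\Longleftrightarrow\quad
p_{d,s}(\widehat{G})>0 .
\]
The bridge between the two sides is that a matrix $Q$ satisfying (i)--(iii) of Definition \ref{dingyi1} is exactly the arc-multiplicity matrix of a closed walk in $\widehat{G}$.

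\textbf{The direction $\mathcal{G}_d\subseteq\mathcal{G}(d,s)$.} Take $Q\in\mathbf{Q}_{d,s}(\widehat{G})$ and form the multi-digraph $M_Q$ on $\widehat{V}$ with $q_{ij}$ arcs from $i$ to $j$. Condition $Q\mathbf{e}=Q^{\top}\mathbf{e}$ says $M_Q$ is balanced. Condition (ii) says the underlying graph of $M_Q$ is $\widehat{G}$, which is connected. Hence $M_Q$ has an Eulerian circuit. This circuit is a closed walk in $\widehat{G}$ of length $\mathbf{e}^{\top}Q\mathbf{e}=d$. It covers every edge by (ii), and it arrives at each vertex $i$ exactly $(Q^{\top}\mathbf{e})_i=(Q\mathbf{e})_i\equiv 0 \pmod s$ times. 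Therefore $p_{d,s}(\widehat{G})>0$.

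\textbf{The direction $\mathcal{G}(d,s)\subseteq\mathcal{G}_d$.} Given a covering $s$-multiple closed walk $w$ of length $d$, let $q_{ij}$ be the number of times $w$ traverses $i\to j$. We check the three conditions of Definition \ref{dingyi1}:
\begin{itemize}
\item (i) holds since $\widehat{G}$ has no loops;
\item (ii) holds since $w$ is covering and only uses edges of $\widehat{G}$;
\item (iii) holds because in a closed walk the numbers of arrivals and departures at each vertex agree, the total is $d$, and each arrival count is a positive multiple of $s$ (positive because $w$ is covering and $\widehat{G}$ is connected with at least one edge).
\end{itemize}
So $Q\in\mathbf{Q}_{d,s}(\widehat{G})$.

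It remains to exhibit some $f\in\mathcal{F}_{d}(\widehat{G}^{[s]},Q)$. Such an $f$ must distribute, for each edge $\{i,j\}$, the $q_{ij}$ rooted copies of $\mathbf{V}_i\cup\mathbf{V}_j$ among roots in $\mathbf{V}_i$ so that every $v\in\mathbf{V}_i$ receives exactly $\frac1s(Q\mathbf{e})_i$ roots in total. This is possible because the total demand at $i$ is $\sum_j q_{ij}=(Q\mathbf{e})_i$, which equals $s$ times the per-vertex quota. Concretely, list the $(Q\mathbf{e})_i$ hyperedge-instances rooted in $\mathbf{V}_i$ and assign them to the $s$ vertices of $\mathbf{V}_i$ in blocks of size $\frac1s(Q\mathbf{e})_i$. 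The non-root parts $\alpha_j$ are any orderings of the remaining $2s-1$ vertices. Sorting by roots gives a valid element of $\mathcal{F}_d$.

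Every hyperedge of $\widehat{G}^{[s]}$ appears, since $q_{ij}+q_{ji}>0$, so $H_f\cong\widehat{G}^{[s]}$. Hence $f\in\mathcal{F}_{d}(\widehat{G}^{[s]},Q)$, and by Lemma \ref{yinli4} (its second inclusion, where $D_f$ was shown to be Eulerian) $\mathfrak{D}_{d}(\widehat{G}^{[s]})\neq\emptyset$. The isomorphism-class representatives then coincide.

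\textbf{Main obstacle.} The only genuinely non-formal step is the existence of $f$ with the prescribed root counts, and the block assignment above settles it. I also have to check one edge case: a single vertex with no edges is excluded, since $p_{d,s}$ is zero there ($d\ge 2s>0$ and there are no walks) and $\widehat{G}^{[s]}$ has no hyperedges.
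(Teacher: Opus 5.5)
Your proof is correct and follows the paper's argument. In one direction, an Eulerian circuit of the multi-digraph with adjacency matrix $Q$ (obtained via Lemma \ref{yinli4}) is a covering $s$-multiple closed walk; in the other, the arc-count matrix of the walk lies in $\mathbf{Q}_{d,s}(\widehat{G})$, and the degree computation in Lemma \ref{yinli4} makes $D_f$ Eulerian. You also supply two details the paper leaves as ``easy to verify'': the block assignment of roots showing $\mathcal{F}_{d}(\widehat{G}^{[s]},Q)\neq\emptyset$, and the explicit use of the connectivity of $\widehat{G}$ for the Eulerian circuit.
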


\begin{proof}
Firstly,
we will show that $\mathcal{G}_{d} \subseteq \mathcal{G}(d,s)$.
For any $\widehat{G} \in \mathcal{G}_{d}$ and $D \in \mathfrak{D}_{d}(\widehat{G}^{[s]})$,
by Lemma \ref{yinli4},
there exists $Q \in \mathbf{Q}_{d,s}(\widehat{G})$ such that $D_f \cong D$ for some $f \in \mathcal{F}_{d}(\widehat{G}^{[s]},Q)$.
%Since $D_f$ is Eulerian and $H_f \cong \widehat{G}^{[s]}$,
%the graph $\widehat{G}$ is connected.
Let $D(Q)$ denote the multi-digraph with adjacency matrix $Q$.
It is known from Definition \ref{dingyi1} that $Q\mathbf{e}=Q^{\top}\mathbf{e}$ and $q_{ij}+q_{ji} > 0$ for each $\{i,j\} \in \widehat{E}$.
So $D(Q)$ is Eulerian.
Moreover, $\mathbf{e}^{\top}Q\mathbf{e}=d$ and $\frac{1}{s}Q\mathbf{e}$ is a positive integer vector,
which implies that the number of arcs of $D(Q)$ is $d$ and the in-degree of each vertex is divisible by $s$.
Then an Eulerian closed walk of $D(Q)$ is a covering $s$-multiple closed walk of length $d$ in $\widehat{G}$.
Thus,
we have $\widehat{G} \in \mathcal{G}(d,s)$,
that is, $\mathcal{G}_{d} \subseteq \mathcal{G}(d,s)$.

Next,
we will show that $\mathcal{G}_{d} \supseteq \mathcal{G}(d,s)$.
For any $\widehat{G} \in \mathcal{G}(d,s)$,
there exists an $s$-multiple closed walk $w$ of length $d$ covering $\widehat{G}$.
We can construct a multi-digraph from the closed walk $w$.
Then it is clearly Eulerian,
and denote its adjacency matrix by $Q_w$.
It is easy to verify that $Q_w \in \mathbf{Q}_{d,s}(\widehat{G})$ and $D_f$ is Eulerian for $f \in \mathcal{F}_{d}(\widehat{G}^{[s]},Q_w)$.
So $D_{f} \cong D \in \mathfrak{D}_{d}(\widehat{G}^{[s]})$,
which implies that $\mathfrak{D}_{d}(\widehat{G}^{[s]})$ is non-empty.
Thus,
we have $\widehat{G} \in \mathcal{G}_{d}$,
that is, $\mathcal{G}_{d} \supseteq \mathcal{G}(d,s)$.
\end{proof}

For $\widehat{G} \in \mathcal{G}(d,s)$ and $Q \in \mathbf{Q}_{d,s}(\widehat{G})$,
we use $D(Q)$ to denote a multi-digraph with adjacency matrix $Q$.
An additional consequence of the proof of Lemma \ref{yinli5} is that $D(Q)$ is an Eulerian multi-digraph and its Eulerian closed walks are covering $s$-multiple closed walks of length $d$ in $\widehat{G}$.
On the other hand,
a covering $s$-multiple closed walk of length $d$ in $\widehat{G}$ is also an Eulerian closed walk of $D(Q)$ for some $Q \in \mathbf{Q}_{d,s}(\widehat{G})$.
By Lemma \ref{yinli+}(b),
we can derive the following result directly.

\begin{lem}\label{yinli6}
Let $s \geq 2$ and $d \geq 2s$ such that $s \mid d$.
For $\widehat{G}=(\widehat{V},\widehat{E}) \in \mathcal{G}(d,s)$,
we have
$$p_{d,s}(\widehat{G})
=\sum_{Q \in \mathbf{Q}_{d,s}(\widehat{G})}\frac{d}{\prod_{\{i,j\}\in\widehat{E}}q_{ij}!q_{ji}!}t(D(Q))\prod_{i\in\widehat{V}}(\mathrm{q}_{i}-1)!,$$
where $t(D(Q))$ is the number of spanning trees of $D(Q)$ and $\mathrm{q}_{i}=(Q\mathbf{e})_{i}$.
\end{lem}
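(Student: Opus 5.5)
We partition the set of covering $s$-multiple closed walks of length $d$ in $\widehat{G}$ according to their "arc-count matrices", and then apply Lemma \ref{yinli+}(b) to each class. Here a closed walk is understood as a cyclic sequence of $d$ arcs together with a distinguished starting position. This is the same convention under which Lemma \ref{yinli+}(b) counts Eulerian closed walks: the factor $|E(D)|$ there accounts for the choice of starting arc.

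Given a covering $s$-multiple closed walk $w$ of length $d$ in $\widehat{G}$, let $Q_w=(q_{ij})$ record, for each ordered pair $(i,j)$, the number of times $w$ traverses the edge $\{i,j\}$ from $i$ to $j$. We check that $Q_w\in\mathbf{Q}_{d,s}(\widehat{G})$.
\begin{enumerate}
\item[(i)] $\widehat{G}$ has no loops, so $q_{ii}=0$.
\item[(ii)] Since $w$ is covering, $q_{ij}+q_{ji}>0$ for every $\{i,j\}\in\widehat{E}$, and $q_{ij}=0$ for non-edges.
\item[(iii)] Since $w$ is closed, $Q_w\mathbf{e}=Q_w^{\top}\mathbf{e}$. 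The total number of traversals gives $\mathbf{e}^{\top}Q_w\mathbf{e}=d$. Finally, $(Q_w^{\top}\mathbf{e})_i$ is the number of arrivals at $i$, which is divisible by $s$ by assumption. It is positive because $\widehat{G}$ is connected with at least one edge, so every vertex is covered.
\end{enumerate}

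Conversely, fix $Q\in\mathbf{Q}_{d,s}(\widehat{G})$ and form the multi-digraph $D(Q)$ with $q_{ij}$ parallel arcs from $i$ to $j$. The walks $w$ with $Q_w=Q$ are exactly the Eulerian closed walks of $D(Q)$, read with parallel arcs regarded as indistinguishable, since a walk in $\widehat{G}$ only records the vertex sequence. As observed after Lemma \ref{yinli5}, $D(Q)$ is Eulerian. It is connected because $\widehat{G}$ is connected and condition (ii) holds. Therefore
\[
p_{d,s}(\widehat{G})=\sum_{Q\in\mathbf{Q}_{d,s}(\widehat{G})}\#\{\text{Eulerian closed walks of }D(Q)\},
\]
and the sets in this sum are disjoint because $w$ determines $Q_w$.

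It remains to evaluate each term with Lemma \ref{yinli+}(b), using $|E(D(Q))|=\mathbf{e}^{\top}Q\mathbf{e}=d$, $\deg^{+}_{D(Q)}(i)=(Q\mathbf{e})_i=\mathrm{q}_i$, and $b(D(Q))=\prod_{(i,j)}q_{ij}!=\prod_{\{i,j\}\in\widehat{E}}q_{ij}!\,q_{ji}!$. Substituting these values gives exactly the summand in the statement.

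The one point needing care is that the counting convention of Lemma \ref{yinli+}(b) matches ours. Dividing by $b(D)$ is precisely what turns sequences of labelled arcs into vertex sequences, so parallel arcs are treated as indistinguishable, as required. Rooting the walk at a starting arc, which yields the factor $|E(D)|=d$, corresponds to walks in $\widehat{G}$ having a starting position. I expect this bookkeeping — confirming that Farrell's count uses the same rooting and the same treatment of multiplicities — to be the only real obstacle; the rest is direct substitution.
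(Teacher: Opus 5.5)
Your proposal is correct and is essentially the paper's argument: the paper also identifies the covering $s$-multiple closed walks of length $d$ in $\widehat{G}$ with the disjoint union, over $Q\in\mathbf{Q}_{d,s}(\widehat{G})$, of the Eulerian closed walks of $D(Q)$, via the matrix $Q_w$ from the proof of Lemma~\ref{yinli5}. It then substitutes $|E(D(Q))|=d$, $\deg^{+}_{D(Q)}(i)=\mathrm{q}_i$ and $b(D(Q))=\prod_{\{i,j\}\in\widehat{E}}q_{ij}!\,q_{ji}!$ into Lemma~\ref{yinli+}(b), exactly as you do. Your concern about conventions is harmless: for walks with a fixed starting position, each vertex sequence lifts to exactly $b(D(Q))$ arc-labelled Eulerian tours, so dividing by $b(D)$ is exact (a single edge gives $p_{4,2}(P_1)=2$, as expected).
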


In order to further reduce the spectral moment coefficient $c_{d}(\widehat{G}^{[s]})$ from \eqref{eq3},
some parameters involved in it are also provided as follows,
including the number of spanning trees and the product of out-degrees of vertices.

\begin{lem}\label{yinli7}
Let $s \geq 2$ and $d \geq 2s$ such that $s \mid d$.
Let $\widehat{G}=(\widehat{V},\widehat{E}) \in \mathcal{G}(d,s)$.
For $Q \in \mathbf{Q}_{d,s}(\widehat{G})$ and $f \in \mathcal{F}_{d}(\widehat{G}^{[s]},Q)$,
the number of spanning trees of the multi-digraph $D_f$ is
\begin{align*}
t(D_f)=\frac{1}{s}t(D(Q))\prod_{i\in\widehat{V}}\left(2\mathrm{q}_{i}\right)^{s-1},
\end{align*}
and
\begin{align*}
\prod_{v \in V(D_f)}\mathrm{deg}_{D_f}^{+}(v)=\prod_{i\in\widehat{V}}\left(\frac{(2s-1)\mathrm{q}_{i}}{s}\right)^{s},
\end{align*}
where $\mathrm{q}_{i}=(Q\mathbf{e})_{i}$.
\end{lem}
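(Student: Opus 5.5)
The plan is to compute $t(D_f)$ from the Laplacian spectrum via Lemma \ref{yinli+}(a), using the block structure of $D_f$ coming from the partition $V(D_f)=\bigcup_{i\in\widehat V}\mathbf V_i$. The out-degree formula is immediate. For $v\in\mathbf V_i$, condition (i) in the definition of $\mathcal F_d(\widehat G^{[s]},Q)$ says that $v$ roots exactly $\mathrm q_i/s$ hyperedges of $f$. Each of these contributes $2s-1$ arcs, so $\mathrm{deg}^+_{D_f}(v)=(2s-1)\mathrm q_i/s$. Taking the product over the $s$ vertices of each $\mathbf V_i$ gives the second formula.

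For the spanning trees, let $L=\mathrm{diag}(\mathrm{deg}^+)-A$ be the Laplacian of $D_f$, where $A(u,v)=m_{D_f}(u,v)$. We record which entries of $A$ are forced.
\begin{itemize}
\item Inside $\mathbf V_i$ we have $A(v,v')=\mathrm q_i/s$ for $v\ne v'$ (this is Lemma \ref{yinli3}(a) together with the count above).
\item For $w\in\mathbf V_j$ and $\{i,j\}\in\widehat E$, the entry $A(w,v)$ equals the number of hyperedges $\mathbf V_i\cup\mathbf V_j$ rooted at $w$. This number is the same for every $v\in\mathbf V_i$, although it may depend on $w$, since $f$ need not distribute roots uniformly.
\end{itemize}

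\emph{Step 1.} Let $x$ be supported on $\mathbf V_i$ with $\sum_{v\in\mathbf V_i}x_v=0$. We claim $Lx=2\mathrm q_i x$.
\begin{itemize}
\item For $w\in\mathbf V_i$, $(Lx)_w=(2s-1)\tfrac{\mathrm q_i}{s}x_w-\tfrac{\mathrm q_i}{s}\sum_{v\ne w}x_v=2\mathrm q_i x_w$.
\item For $w\notin\mathbf V_i$, $(Lx)_w=-\sum_{v\in\mathbf V_i}A(w,v)x_v=0$, because $A(w,\cdot)$ is constant on $\mathbf V_i$.
\end{itemize}
This gives an $L$-invariant subspace $W$ on which $L$ has eigenvalue $2\mathrm q_i$ with multiplicity $s-1$ for each $i$.

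\emph{Step 2.} Pass to the quotient by $W$, identified with $\mathbb C^{\widehat V}$ through the block-sum map $\sigma(x)_j=\sum_{w\in\mathbf V_j}x_w$. Apply $\sigma$ to the column of $L$ indexed by $v\in\mathbf V_i$.
\begin{itemize}
\item The $i$-th entry is $(2s-1)\mathrm q_i/s-(s-1)\mathrm q_i/s=\mathrm q_i$.
\item For $j\ne i$, the $j$-th entry is $-\sum_{w\in\mathbf V_j}A(w,v)=-q_{ji}$, by condition (ii).
\end{itemize}
So the induced operator is $\mathrm{diag}(\mathrm q)-Q^{\top}$, the transpose of the Laplacian of $D(Q)$, and it has the same spectrum as that Laplacian. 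This is the step I expect to need care: the non-uniformity of roots inside a block means the block-constant vectors are not invariant. Working with the quotient by $W$ (equivalently, with $L^{\top}$ acting on block-constant vectors) avoids that issue.

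\emph{Step 3.} $D(Q)$ is Eulerian and connected, since $\widehat G$ is connected and $q_{ij}+q_{ji}>0$ on edges. Hence $0$ is a simple Laplacian eigenvalue of $D(Q)$, and by Lemma \ref{yinli+}(a) its nonzero eigenvalues multiply to $|\widehat V|\,t(D(Q))$. The same lemma applies to $D_f$, which is Eulerian by Lemma \ref{yinli4}; its $0$ eigenvalue is simple because the $2\mathrm q_i$ from Step 1 are positive. Combining Steps 1 and 2, the product of the nonzero eigenvalues of $L$ is $|\widehat V|\,t(D(Q))\prod_i(2\mathrm q_i)^{s-1}$. Dividing by $|V(D_f)|=s|\widehat V|$ yields
\[
t(D_f)=\frac1s\,t(D(Q))\prod_{i\in\widehat V}(2\mathrm q_i)^{s-1}.
\]
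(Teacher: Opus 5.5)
Your proof is correct and is essentially the paper's argument in coordinate-free form. The paper conjugates $L_{D_f}$ by the orthogonal matrix $P^{\top}\mathbf{U}$ to reach the block lower triangular form $\begin{pmatrix} L_{D(Q)} & \mathbf{0}\\ M & N\end{pmatrix}$. Its zero block says exactly that your subspace $W$ of block-wise sum-zero vectors is invariant, with $L_{D_f}$ acting on it as $N$; its $L_{D(Q)}$ block is your quotient operator; and the possibly nonzero $M$ is the non-invariance of block-constant vectors that you flagged.

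There is one harmless slip: your own column computation gives $(j,i)$ entry $-q_{ji}$, so the induced matrix is $\mathrm{diag}(\mathrm{q})-Q=L_{D(Q)}$ itself rather than $\mathrm{diag}(\mathrm{q})-Q^{\top}$. This does not affect the spectrum.
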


\begin{proof}
For $Q \in \mathbf{Q}_{d,s}(\widehat{G})$ and $f \in \mathcal{F}_{d}(\widehat{G}^{[s]},Q)$,
we write the Laplacian matrix of the multi-digraph $D_f$ as a block matrix
\begin{align*}
L_{D_f}=
\begin{pmatrix} L_{11} & L_{12} & \cdots & L_{1|\widehat{V}|} \\ L_{21} & L_{22} & \cdots & L_{2|\widehat{V}|} \\ \vdots & \vdots & \ddots & \vdots \\ L_{|\widehat{V}|1} & L_{|\widehat{V}|2} & \cdots & L_{|\widehat{V}||\widehat{V}|}
\end{pmatrix}.
\end{align*}
For each $i \in \widehat{V}$,
the matrix $L_{ii}=L_{D_f}[\mathbf{V}_{i}]$ is an $s \times s$ matrix given by $L_{ii}=2\mathrm{q}_{i}I-\frac{\mathrm{q}_{i}}{s}J$,
where $I$ and $J$ are an identity matrix and an all-ones matrix of size $s \times s$, respectively.
%that is, the diagonal entries are $\frac{(2s-1)\mathrm{q}_{i}}{s}$ and the off-diagonal entries are $-\frac{\mathrm{q}_{i}}{s}$.

For each $i \in \widehat{V}$ and each $v \in \mathbf{V}_{i}$,
let $q_{ij}(v)$ denote the number of rooted hyperedges $\mathbf{V}_{i} \cup \mathbf{V}_{j}$ in $f$ rooted at $v$,
where $j \in \widehat{V}$ is distinct from $i$.
Note that $q_{ij}(v)=0$ if $\{i,j\} \notin \widehat{E}$,
and it is known from the construction of $f \in \mathcal{F}_{d}(\widehat{G}^{[s]},Q)$ that $$\sum_{\{i,j\} \in \widehat{E}}q_{ij}(v)=\frac{\mathrm{q}_{i}}{s}\ \mbox{and}\ \sum_{v \in \mathbf{V}_{i}}q_{ij}(v)=q_{ij}.$$
Let $\mathbf{q}_{ij}$ be an $s$-dimensional column vector with entries $q_{ij}(v)$ for all $v \in \mathbf{V}_{i}$.
The matrix $L_{ij}$ is an $s \times s$ zero matrix for all $\{i,j\} \notin \widehat{E}$,
and $L_{ij}$ is an $s \times s$ matrix given by $L_{ij}=-\mathbf{q}_{ij}\mathbf{\widetilde{e}}^{\top}$ for all $\{i,j\} \in \widehat{E}$,
where $\mathbf{\widetilde{e}}$ is an $s$-dimensional all-ones column vector.

Next,
we will prove that $L_{D_f}$ is orthogonally similar to a block lower triangular matrix,
thereby determining all eigenvalues of $L_{D_f}$ and obtaining the number of spanning trees of $D_f$ by Lemma \ref{yinli+}(a).
Consider the following mutually orthogonal $s$-dimensional column vectors $$(1,1,1,\ldots,1)^{\top}, (1,-1,0,\ldots,0)^{\top}, (1,1,-2,\ldots,0)^{\top}, \ldots , (1,1,1,\ldots,-(s-1))^{\top},$$
and we denote their normalized vectors, in order, by $\mathbf{u}_{1}=\frac{1}{\sqrt{s}}\mathbf{\widetilde{e}},\mathbf{u}_{2},\mathbf{u}_{3},\ldots,\mathbf{u}_{s}$.
Construct an $s \times s$ matrix as follows
$$U=\begin{pmatrix} \mathbf{u}_{1}^{\top} \\ \mathbf{u}_{2}^{\top} \\ \vdots \\ \mathbf{u}_{s}^{\top} \end{pmatrix},$$
and let $\mathbf{U}=\mathrm{diag}(U,U,\ldots,U)$ be a diagonal block matrix with $|\widehat{V}|$ copies of $U$.
We calculate each block submatrix of $\mathbf{U}L_{D_f}\mathbf{U}^{\top}$.
For each $i \in \widehat{V}$,
we have
\begin{align}\label{eq6}
UL_{ii}U^{\top}=U\left(2\mathrm{q}_{i}I-\frac{\mathrm{q}_{i}}{s}J\right)U^{\top}=\mathrm{diag}(\mathrm{q_{i}},2\mathrm{q_{i}},\ldots,2\mathrm{q_{i}}).
\end{align}
For each $\{i,j\} \notin \widehat{E}$,
we have $UL_{ij}U^{\top}=0$.
For each $\{i,j\} \in \widehat{E}$,
since $\mathbf{u}_{1}^{\top} \mathbf{q}_{ij}=\frac{1}{\sqrt{s}}\sum_{v \in \mathbf{V}_{i}}q_{ij}(v)=\frac{q_{ij}}{\sqrt{s}}$
and $\mathbf{\widetilde{e}}^{\top}U^{\top}=(\sqrt{s},0,\ldots,0)$,
we have
\begin{align}\label{eq7}
UL_{ij}U^{\top}=-U\mathbf{q}_{ij}\mathbf{\widetilde{e}}^{\top}U^{\top}=(-\mathfrak{q}_{ij},\mathbf{0},\ldots,\mathbf{0}),
\end{align}
where $\mathfrak{q}_{ij}$ is an $s$-dimensional column vector whose first component is $q_{ij}$.

For all $t \in [|\widehat{V}|s]$,
let $\alpha_{t}$ be the $|\widehat{V}|s$-dimensional column vector whose $t$-th component is $1$ and all other components are $0$.
Let $P$ be a $|\widehat{V}|s \times |\widehat{V}|s$ permutation matrix with the form $$P=\left(\alpha_{1},\alpha_{s+1},\ldots,\alpha_{(|\widehat{V}|-1)s+1},\alpha_{2},\ldots,\alpha_{s},\ldots,\alpha_{(|\widehat{V}|-1)s+2},\ldots,\alpha_{|\widehat{V}|s}\right).$$
From \eqref{eq6} and \eqref{eq7},
we get
\begin{align}\label{eq8}
P^{\top}\mathbf{U}L_{D_f}\mathbf{U}^{\top}P=\begin{pmatrix} L_{D(Q)} & \mathbf{0} \\ M & N \end{pmatrix},
\end{align}
where $L_{D(Q)}$ is the Laplacian matrix of the multi-digraph $D(Q)$ with adjacency matrix $Q$,
and $N$ is a $|\widehat{V}|(s-1) \times |\widehat{V}|(s-1)$ diagonal matrix whose diagonal entries consist of $s-1$ copies of $2\mathrm{q}_{i}$ for all $i \in \widehat{V}$.
Since both $\mathbf{U}$ and $P$ are orthogonal,
$P^{\top}\mathbf{U}$ is also orthogonal,
%Then $P^{\top}\mathcal{U}L_{D}\mathcal{U}^{\top}P$ is orthogonally similar to $L_{D}$,
%and consequently they have the same spectrum.
which implies that the matrix in \eqref{eq8} is orthogonally similar to $L_{D_f}$.
From \eqref{eq8},
the eigenvalues of $L_{D_f}$ consist of the eigenvalues of $L_{D(Q)}$ and $N$.
Note that the eigenvalues $2\mathrm{q}_{i}$ of the diagonal matrix $N$ are nonzero for all $i \in \widehat{V}$,
and the Laplacian matrix $L_{D(Q)}$ has exactly one eigenvalue $0$ because $D(Q)$ is Eulerian.
Let $\mu_{1},\cdots,\mu_{|\widehat{V}|-1},\mu_{|\widehat{V}|}=0$ be the eigenvalues of $L_{D(Q)}$.
By Lemma \ref{yinli+}(a),
we have $$t(D_f)=\frac{1}{|\widehat{V}|s}\mu_{1}\cdots\mu_{|\widehat{V}|-1}\prod_{i \in \widehat{V}}(2\mathrm{q}_{i})^{s-1}.$$
Since $t(D(Q))=\frac{1}{|\widehat{V}|}\mu_{1}\cdots\mu_{|\widehat{V}|-1}$,
we get
$$t(D_f)=\frac{1}{s}t(D(Q))\prod_{i \in \widehat{V}}(2\mathrm{q}_{i})^{s-1}.$$

Recall that the diagonal blocks of $L_{D_f}$ are $L_{ii}=2\mathrm{q}_{i}I-\frac{\mathrm{q}_{i}}{s}J$ for all $i \in \widehat{V}$.
The out-degree of each vertex $v \in \mathbf{V}_{i}$ is equal to the corresponding diagonal entry of $L_{ii}$,
that is, $\mathrm{deg}_{D_f}^{+}(v)=\frac{(2s-1)\mathrm{q}_{i}}{s}$.
Thus,
we get $$\prod_{v \in V(D_f)}\mathrm{deg}_{D_f}^{+}(v)=\prod_{i\in\widehat{V}}\left(\frac{(2s-1)\mathrm{q}_{i}}{s}\right)^{s}.$$
\end{proof}

We are now ready to further reduce the spectral moment coefficient $c_{d}(\widehat{G}^{[s]})$ from \eqref{eq3} using the number of covering $s$-multiple closed walks.

\begin{lem}\label{yinli8}
Let $s \geq 2$ and $d \geq 2s$ such that $s \mid d$.
For $\widehat{G}=(\widehat{V},\widehat{E}) \in \mathcal{G}(d,s)$,
the $d$-th order spectral moment coefficient of $\widehat{G}^{[s]}$ is
$$c_{d}(\widehat{G}^{[s]})=\frac{2^{|\widehat{V}|(s-1)}s^{|\widehat{V}|s-1}}{(2s-1)^{|\widehat{V}|s-1}}p_{d,s}(\widehat{G}).$$
\end{lem}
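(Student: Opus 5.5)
The plan is to start from \eqref{eq3} with $k=2s$ and turn the sum over isomorphism classes $D\in\mathfrak{D}_d(\widehat{G}^{[s]})$ back into a sum over individual $f$. The quantity $\left|\{f\in\mathcal{F}_d(\widehat{G}^{[s]}): D_f\cong D\}\right|\,t(D)/\prod_v\mathrm{deg}^+_D(v)$ summed over $D$ equals $\sum_f t(D_f)/\prod_v \mathrm{deg}^+_{D_f}(v)$, where $f$ ranges over all $f\in\mathcal{F}_d(\widehat{G}^{[s]})$ with $D_f$ Eulerian.

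By Lemma \ref{yinli4} and its proof, every such $f$ lies in $\mathcal{F}_d(\widehat{G}^{[s]},Q)$ for some $Q\in\mathbf{Q}_{d,s}(\widehat{G})$. This $Q$ is unique, because $Q$ is read off from $f$ as the numbers $q_{ij}$ of rooted hyperedges. Conversely, every $f\in\mathcal{F}_d(\widehat{G}^{[s]},Q)$ has $D_f$ Eulerian. Hence
\[
c_d(\widehat{G}^{[s]})=d(2s-1)\bigl((2s-1)!\bigr)^{-d}\sum_{Q\in\mathbf{Q}_{d,s}(\widehat{G})}\ \sum_{f\in\mathcal{F}_d(\widehat{G}^{[s]},Q)}\frac{t(D_f)}{\prod_{v}\mathrm{deg}^+_{D_f}(v)}.
\]
Lemma \ref{yinli7} shows that the summand depends only on $Q$. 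It is therefore enough to compute $|\mathcal{F}_d(\widehat{G}^{[s]},Q)|$.

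\textbf{Counting $f$.} Put $\mathrm{q}_i=(Q\mathbf{e})_i$ and $r_i=\mathrm{q}_i/s$.
\begin{enumerate}
\item[(1)] A rooted hyperedge with root $v$ and underlying edge $\mathbf{V}_i\cup\mathbf{V}_j$ admits $(2s-1)!$ orderings $\alpha$ of its non-root vertices. These contribute $((2s-1)!)^d$ in total, which cancels the prefactor.
\item[(2)] The tuple $f$ is sorted by root, but hyperedges with the same root may appear in any order. So if $v\in\mathbf{V}_i$ has $q_{ij}(v)$ hyperedges towards each $j$, with $\sum_j q_{ij}(v)=r_i$, the arrangements at $v$ contribute the multinomial $r_i!/\prod_j q_{ij}(v)!$.
\item[(3)] Summing over all distributions $(q_{ij}(v))_{v\in\mathbf{V}_i}$ with $\sum_v q_{ij}(v)=q_{ij}$ is a coefficient extraction:
\[
\sum\prod_{v\in\mathbf{V}_i}\frac{r_i!}{\prod_j q_{ij}(v)!}=(r_i!)^s\,[x^{q_{i\cdot}}]\Bigl(\frac{(\sum_j x_j)^{r_i}}{r_i!}\Bigr)^{s}=\frac{\mathrm{q}_i!}{\prod_j q_{ij}!}.
\]
\end{enumerate}
Condition (ii) of Definition \ref{dingyi1} ensures $H_f\cong\widehat{G}^{[s]}$, so every such choice is admissible. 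Therefore
\[
|\mathcal{F}_d(\widehat{G}^{[s]},Q)|=((2s-1)!)^d\prod_{i}\frac{\mathrm{q}_i!}{\prod_j q_{ij}!}.
\]

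\textbf{Assembling.} Substitute this count and the two formulas of Lemma \ref{yinli7}. For each $i$ the factor $\mathrm{q}_i!\,\mathrm{q}_i^{s-1}/\mathrm{q}_i^{s}$ becomes $(\mathrm{q}_i-1)!$. Writing $n=|\widehat{V}|$, the constants collect as
\[
(2s-1)\cdot\tfrac{1}{s}\cdot 2^{n(s-1)}\cdot\frac{s^{ns}}{(2s-1)^{ns}}=\frac{2^{n(s-1)}s^{ns-1}}{(2s-1)^{ns-1}}.
\]
What remains is exactly $\sum_Q \frac{d}{\prod q_{ij}!q_{ji}!}\,t(D(Q))\prod_i(\mathrm{q}_i-1)!$, which equals $p_{d,s}(\widehat{G})$ by Lemma \ref{yinli6}.

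The main obstacle is step (3): counting $f$ correctly, including the ordered-tuple convention within equal roots. The multinomial identity is what makes the per-vertex distributions collapse into $\mathrm{q}_i!/\prod_j q_{ij}!$. I would verify this count on a small case, such as $s=2$ and $\widehat{G}=K_2$, before finalizing.
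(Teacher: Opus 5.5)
Your proposal is correct and follows the paper's proof step for step. You rewrite \eqref{eq3} as a sum over $Q\in\mathbf{Q}_{d,s}(\widehat{G})$ via Lemma \ref{yinli4}, use Lemma \ref{yinli7} to see that $t(D_f)/\prod_v \mathrm{deg}^+_{D_f}(v)$ depends only on $Q$, count $|\mathcal{F}_d(\widehat{G}^{[s]},Q)|=((2s-1)!)^d\prod_i \mathrm{q}_i!/\prod_j q_{ij}!$, and finish with Lemma \ref{yinli6}. Your summation over the per-vertex distributions $q_{ij}(v)$ (the multinomial identity in step (3)) justifies \eqref{eq10} more carefully than the paper's informal permutation argument, and summing over individual $f$ makes the partition by $Q$ more transparent, but the route is the same.
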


\begin{proof}
For each $Q \in \mathbf{Q}_{d,s}(\widehat{G})$,
denote $$\mathfrak{D}_{d}(\widehat{G}^{[s]},Q)=\left\{ D : \mbox{$D_{f} \cong D$ for some $f \in \mathcal{F}_{d}(\widehat{G}^{[s]},Q)$} \right\}.$$
It follows from \eqref{eq3} and Lemmas \ref{yinli4} and \ref{yinli5} that
\begin{align*}
c_{d}(\widehat{G}^{[s]})=&d(2s-1)\left((2s-1)!\right)^{-d}\\
&\times\sum_{Q \in \mathbf{Q}_{d,s}(\widehat{G})}\sum_{D \in \mathfrak{D}_{d}(\widehat{G}^{[s]},Q)}\frac{\left|\left\{f:\mbox{$f \in \mathcal{F}_{d}(\widehat{G}^{[s]},Q)$ and $D_{f}\cong D$}\right\}\right|t(D)}{\prod_{v\in V(D)}\mathrm{deg}_{D}^{+}(v)}.
\end{align*}
By Lemma \ref{yinli7},
for $Q \in \mathbf{Q}_{d,s}(\widehat{G})$ and $f \in \mathcal{F}_{d}(\widehat{G}^{[s]},Q)$,
we have $$\frac{t(D_f)}{\prod_{v\in V(D_f)}\mathrm{deg}_{D_f}^{+}(v)}
=\frac{2^{|\widehat{V}|(s-1)}s^{|\widehat{V}|s-1}t(D(Q))}{(2s-1)^{|\widehat{V}|s}\prod_{i \in \widehat{V}}\mathrm{q}_{i}},$$
where $\mathrm{q}_{i}=(Q\mathbf{e})_i$.
It implies that $\frac{t(D)}{\prod_{v\in V(D)}\mathrm{deg}_{D}^{+}(v)}$ depends only on $Q$ for any $D \in \mathfrak{D}_{d}(\widehat{G}^{[s]},Q)$.
Since $$\sum_{D \in \mathfrak{D}_{d}(\widehat{G}^{[s]},Q)}\left|\left\{f:\mbox{$f\in\mathcal{F}_{d}(\widehat{G}^{[s]},Q)$ and $D_{f}\cong D$}\right\}\right|=\left|\mathcal{F}_{d}(\widehat{G}^{[s]},Q)\right|,$$
we get
\begin{align}\label{eq9}
c_{d}(\widehat{G}^{[s]})
=d\left((2s-1)!\right)^{-d}\sum_{Q \in \mathbf{Q}_{d,s}(\widehat{G})}\frac{2^{|\widehat{V}|(s-1)}s^{|\widehat{V}|s-1}t(D(Q))}{(2s-1)^{|\widehat{V}|s-1}\prod_{i \in \widehat{V}}\mathrm{q}_{i}}\left|\mathcal{F}_{d}(\widehat{G}^{[s]},Q)\right|.
\end{align}

For $f=(v_{1}\alpha_{1},\ldots,v_{d}\alpha_{d}) \in \mathcal{F}_{d}(\widehat{G}^{[s]},Q)$,
recall that $v_1 \leq \cdots \leq v_d$ and $v_{j}\alpha_{j}$ represent a rooted hyperedge of $\widehat{G}^{[s]}$ rooted at $v_j$ for all $j \in [d]$.
Moreover,
the number of rooted hyperedges in $f$ rooted at $v$ is $\frac{\mathrm{q}_i}{s}$ for all $v \in \mathbf{V}_{i}$ and each $i \in \widehat{V}$.
Also,
the number of rooted hyperedges $\mathbf{V}_{i} \cup \mathbf{V}_{j}$ in $f$ rooted at vertices in $\mathbf{V}_{i}$ is $q_{ij}$,
and the number of those rooted at vertices in $\mathbf{V}_{j}$ is $q_{ji}$ for each $\{i,j\} \in \widehat{E}$.

In order to count the number of $f$ in $\mathcal{F}_{d}(\widehat{G}^{[s]},Q)$,
note that we can permute the $2s-1$ non-roots of each rooted hyperedge.
We can also permute the rooted hyperedges with the same root.
For each $i \in \widehat{V}$,
a vertex in $\mathbf{V}_{i}$ occurs as a root $\frac{\mathrm{q}_i}{s}$ times,
but permuting the same rooted hyperedges (with rooted hyperedge $\mathbf{V}_{i} \cup \mathbf{V}_{j}$ rooted at vertices in $\mathbf{V}_{i}$ occurring $q_{ij}$ times) leads to same $f$.
It implies that
\begin{align}\label{eq10}
\left|\mathcal{F}_{d}(\widehat{G}^{[s]},Q)\right|
&=\left((2s-1)!\right)^{d}\prod_{i \in \widehat{V}}\frac{(\sum_{v \in \mathbf{V}_{i}}\frac{\mathrm{q}_i}{s})!}{\prod_{\{i,j\} \in \widehat{E}}q_{ij}!}\notag\\
&=\left((2s-1)!\right)^{d}\frac{\prod_{i \in \widehat{V}}\mathrm{q}_{i}!}{\prod_{\{i,j\} \in \widehat{E}}q_{ij}!q_{ji}!}.
\end{align}
Substituting \eqref{eq10} into \eqref{eq9},
we have
\begin{align*}
c_{d}(\widehat{G}^{[s]})
=\frac{2^{|\widehat{V}|(s-1)}s^{|\widehat{V}|s-1}}{(2s-1)^{|\widehat{V}|s-1}}\sum_{Q \in \mathbf{Q}_{d,s}(\widehat{G})}\frac{d}{\prod_{\{i,j\} \in \widehat{E}}q_{ij}!q_{ji}!}t(D(Q))\prod_{i \in \widehat{V}}(\mathrm{q}_{i}-1)!.
\end{align*}
By Lemma \ref{yinli6},
we get $$c_{d}(\widehat{G}^{[s]})=\frac{2^{|\widehat{V}|(s-1)}s^{|\widehat{V}|s-1}}{(2s-1)^{|\widehat{V}|s-1}}p_{d,s}(\widehat{G}).$$
\end{proof}

The spectral moments of the $s$-vertex expansion hypergraph of a graph are given as follows.

\begin{thm}\label{dingli1}
Let $G=(V,E)$ be a graph.
Then the $d$-th order spectral moment of $G^{[s]}$ is
\begin{align}\label{eq11}
\mathrm{S}_{d}(G^{[s]})
=
\scalebox{0.88}{$
\begin{cases}
(2s-1)^{|V|s-1}\sum_{\widehat{G}\in\mathcal{G}(d,s)}\frac{2^{|\widehat{V}|(s-1)}s^{|\widehat{V}|s-1}}{(2s-1)^{|\widehat{V}|s-1}}p_{d,s}(\widehat{G})N_{G}(\widehat{G}), &\text{if $d\geq2s$ and $s \mid d$},\\
0, &\text{otherwise}.
\end{cases}
$}
\end{align}
\end{thm}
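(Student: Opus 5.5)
The theorem assembles results already established, so the plan is to substitute them into \eqref{eq4} and check that the indexing sets agree. I will split into the two cases of \eqref{eq11}.

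\emph{Case $d<2s$ or $s\nmid d$.} Here I will reuse the argument preceding \eqref{eq4}. By \cite{fan2019spectral}, the spectrum of $G^{[s]}$ is $s$-symmetric for $s\ge 2$. Multiplying every eigenvalue by $\mathrm{e}^{2\pi\mathbf{i}/s}$ permutes the spectrum as a multiset, so $\mathrm{S}_{d}(G^{[s]})=\mathrm{e}^{2\pi d\mathbf{i}/s}\mathrm{S}_{d}(G^{[s]})$. This forces $\mathrm{S}_{d}(G^{[s]})=0$ whenever $s\nmid d$. For $d<2s$, the first $k-1=2s-1$ spectral moments of a $2s$-uniform hypergraph vanish by \cite{cooper2012spectra}. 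Since $d\ge 1$ is implicit, these two facts cover the whole ``otherwise'' branch. For $s=1$ the hypergraph $G^{[1]}$ is just $G$; I would remark that the statement is intended for $s\ge2$, as in the rest of the section.

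\emph{Case $d\ge 2s$ and $s\mid d$.} I start from the first line of \eqref{eq4}, which is Lemma \ref{yinli1} applied to the $2s$-uniform hypergraph $G^{[s]}$ on $|V|s$ vertices. This requires one justification: the connected subhypergraphs $\widehat{H}\in\mathcal{H}_d$ of $G^{[s]}$ are exactly the hypergraphs $\widehat{G}^{[s]}$ for connected subgraphs $\widehat{G}$ of $G$, and $N_{G^{[s]}}(\widehat{G}^{[s]})=N_G(\widehat{G})$. Both facts hold because each hyperedge of $G^{[s]}$ is a union $\mathbf{V}_i\cup\mathbf{V}_j$ of full blocks, so the hyperedge sets of $G^{[s]}$ correspond bijectively with the edge sets of $G$. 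Then:
\begin{enumerate}
\item[1.] Replace the index set $\mathcal{G}_d$ by $\mathcal{G}(d,s)$ using Lemma \ref{yinli5}.
\item[2.] For each $\widehat{G}\in\mathcal{G}(d,s)$, substitute the value of $c_d(\widehat{G}^{[s]})$ from Lemma \ref{yinli8}.
\end{enumerate}
This gives exactly the first branch of \eqref{eq11}.

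The only step that needs care is the identification behind \eqref{eq4}. Isomorphism classes of subhypergraphs of $G^{[s]}$ must match isomorphism classes of subgraphs of $G$: an isomorphism $\widehat{G}_1^{[s]}\cong\widehat{G}_2^{[s]}$ should come from $\widehat{G}_1\cong\widehat{G}_2$. I would argue this by noting that the blocks $\mathbf{V}_i$ are recoverable intrinsically as the classes of vertices lying in exactly the same hyperedges, assuming $\widehat{G}$ has no isolated vertices, which holds since $\widehat{G}$ is connected with at least one edge because $d\ge 2s$. Two vertices in different blocks cannot lie in the same set of hyperedges, since an edge $\{i,j\}$ alone separates $\mathbf{V}_j$ from any $\mathbf{V}_l$ with $l\ne i,j$. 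With this, the rest is direct substitution.
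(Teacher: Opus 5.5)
For $s\ge 2$ your argument is the paper's proof: the paper also just substitutes Lemma~\ref{yinli5} and Lemma~\ref{yinli8} into \eqref{eq4}. Your extra justification of \eqref{eq4} goes beyond what the paper writes, since the paper takes for granted that the members of $\mathcal{H}_d$ are the $\widehat{G}^{[s]}$ and that $N_{G^{[s]}}(\widehat{G}^{[s]})=N_G(\widehat{G})$. One small repair is needed there. Your claim that vertices in different blocks never lie in the same hyperedges fails for $\widehat{G}=K_2$, where all $2s$ vertices lie in the single hyperedge. To separate $\mathbf{V}_i$ from $\mathbf{V}_l$ you need an edge at $i$ or at $l$ other than $\{i,l\}$. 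For connected $\widehat{G}$ such an edge exists for every pair unless $\widehat{G}=K_2$. That case is trivial anyway: a subhypergraph isomorphic to $K_2^{[s]}$ has exactly one hyperedge, so it comes from a single edge of $G$.

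The genuine gap is the case $s=1$. The statement places no restriction on $s$, and the paper's proof begins by treating $s=1$; Section 4 also relies on it through $\mu_i(1)$. So declaring the theorem to be intended only for $s\ge 2$ leaves part of it unproved. You also cannot simply rerun your argument, because \eqref{eq4} and Lemmas \ref{yinli3}--\ref{yinli8} are derived under the hypothesis $s\ge 2$.

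The missing case is short. Here $G^{[1]}=G$, the adjacency tensor is the adjacency matrix $A$, and the characteristic polynomial is $\det(\lambda I-A)$. Hence $\mathrm{S}_d(G)=\mathrm{tr}(A^d)$, the number of closed walks of length $d$ in $G$. For $s=1$ every prefactor in \eqref{eq11} equals $1$, and every closed walk is $1$-multiple. For $d\ge 2$, group the closed walks by the connected subgraph they traverse. Each copy of $\widehat{G}$ in $G$ contributes $p_{d,1}(\widehat{G})$ walks, so the total is $\sum_{\widehat{G}\in\mathcal{G}(d,1)}p_{d,1}(\widehat{G})N_G(\widehat{G})$. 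The ``otherwise'' branch reduces to $d=1$, where $\mathrm{tr}(A)=0$.
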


\begin{proof}
We first consider the case $s=1$. 
Note that every closed walk of a graph is a $1$-multiple closed walk.
In this case,
$p_{d,1}(\widehat{G})$ denotes the number of covering closed walks of length $d$ in $\widehat{G}$. 
It is known that the $d$-th order spectral moment of $G$ is equal to the number of
closed walks of length $d$ in $G$. 
Therefore,
the result holds for $s=1$. 

For $s\geq2$, 
the result follows directly from \eqref{eq4} and Lemmas \ref{yinli5} and \ref{yinli8}.
\end{proof}

\noindent\textbf{Remark.}
It was shown in \cite{fan2019spectral} that the spectrum of the $s$-vertex expansion hypergraph of a bipartite graph is $2s$-symmetric.
We show that this fact is also reflected in \eqref{eq11}.
Let $G$ be a bipartite graph,
and let $X$ and $Y$ be the bipartition of $V$.
Suppose that $w$ is a covering $s$-multiple closed walk of length $d$ in $G$,
and denote the number of times $w$ arrives at $i \in V$ by $n_{w}(i)$.
Then we have $$\sum_{i \in X}n_{w}(i)+\sum_{i \in Y}n_{w}(i)=d.$$
Since $w$ arrives at vertices in $X$ and $Y$ the same number of times,
i.e., $\sum_{i \in X}n_{w}(i)\\=\sum_{i \in Y}n_{w}(i)$,
we have $2\sum_{i \in X}n_{w}(i)=2\sum_{i \in Y}n_{w}(i)=d$.
It is known that $s \mid n_{w}(i)$ for each $i \in V$ because $w$ is an $s$-multiple closed walk.
Thus,
we get $2s \mid d$.
Consequently,
if there exist covering $s$-multiple closed walks of length $d$ in $G$,
then $2s \mid d$.
Note that any subgraph of $G$ is also bipartite.
It implies that $\mathcal{G}(d,s)=\emptyset$ if $2s \nmid d$ and we have $\mathrm{S}_{d}(G^{[s]})=0$ from \eqref{eq11}.

\section{The characteristic polynomials of vertex expansion hypergraphs of graphs}

In \cite{lin2026eigenvalue},
we determined all eigenvalues of the $s$-vertex expansion hypergraph $G^{[s]}$ by using the eigenvalues of so-called $2s$-weighted graphs on the graph $G$.
In this section,
we will derive an expression for the multiplicities of the eigenvalues of $G^{[s]}$ in terms of these eigenvalues, numbers of $s$-multiple closed walks in $G$ and spectral moments,
thereby giving the characteristic polynomial of $G^{[s]}$.

A $2s$-weighted graph $G_{\pi}$ consists of the underlying graph $G=(V,E)$ and $\pi:V\rightarrow\{\xi\in\mathbb{C}:\xi^{2s}=1\}$.
The matrix $A(G_{\pi})=(a_{ij}^{\pi})$ is the adjacency matrix of $G_{\pi}$,
where
\begin{equation*}
a_{ij}^{\pi}=\begin{cases}
\pi(i)\pi(j),&\mbox{if $\{i,j\} \in E$},\\
0,&\mbox{otherwise}.
\end{cases}
\end{equation*}
The eigenvalues of $A(G_{\pi})$ are called the eigenvalues of $G_{\pi}$.
An induced subgraph of $G_{\pi}$ is called a $2s$-weighted induced subgraph of $G$.
All eigenvalues of the $s$-vertex expansion hypergraph $G^{[s]}$ are given as follows.

\begin{lem}\citep[Theorem 3.1]{lin2026eigenvalue}\label{yinli9}
Let $G$ be a graph and $s\geq2$.
A complex number $\lambda$ is an eigenvalue of $G^{[s]}$ if and only if
$\lambda$ is an eigenvalue of some $2s$-weighted induced subgraph of $G$.
\end{lem}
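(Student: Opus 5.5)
The plan is to reduce the eigen-equations of $A_{G^{[s]}}$ to a linear eigen-equation for a weighted adjacency matrix, by passing to the products $P_i=\prod_{u\in\mathbf{V}_i}x_u$. Write $\mathbf{x}=(x_v)$ for a vector indexed by $V(G^{[s]})=\bigcup_{i\in V}\mathbf{V}_i$. Each hyperedge $\mathbf{V}_i\cup\mathbf{V}_j$ has entry $\frac{1}{(2s-1)!}$, and there are $(2s-1)!$ orderings of the non-root vertices. Hence for $v\in\mathbf{V}_i$ the eigen-equation reads
\begin{align*}
\Big(\prod_{u\in\mathbf{V}_i\setminus\{v\}}x_u\Big)\sum_{\{i,j\}\in E}P_j=\lambda x_v^{2s-1}.
\end{align*}
Two observations drive the argument. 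First, if $x_v=0$ for some $v\in\mathbf{V}_i$, then, since $s\ge2$, the left-hand side of the equation at any other $v'\in\mathbf{V}_i$ contains the factor $x_v=0$. Second, multiplying the equation at $v$ by $x_v$ gives $P_i\sum_{j\sim i}P_j=\lambda x_v^{2s}$. So, for $\lambda\ne0$, the value $x_v^{2s}=:c_i$ is the same for all $v\in\mathbf{V}_i$.

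I first dispose of $\lambda=0$. The vector supported on a single vertex satisfies all equations, because every product contains a zero factor when $s\ge2$. So $0$ is an eigenvalue of $G^{[s]}$, and $0$ is also an eigenvalue of any one-vertex induced subgraph. Thus $\lambda=0$ is on both sides, and from now on I assume $\lambda\ne0$.

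For the forward direction, take an eigenvector $\mathbf{x}$ and let $S=\{i:c_i\neq0\}$, which is nonempty since $\mathbf{x}\ne0$. For $i\notin S$ the whole block $\mathbf{V}_i$ vanishes, so $P_i=0$. For $i\in S$ I fix $z_i$ with $z_i^{2s}=c_i$ and write $x_v=\omega_vz_i$ with $\omega_v^{2s}=1$. Then $P_i=\pi(i)z_i^s$, where $\pi(i)=\prod_{v\in\mathbf{V}_i}\omega_v$ is again a $2s$-th root of unity. Substituting into the multiplied equation and dividing by $\pi(i)z_i^s\neq0$ gives
\begin{align*}
\sum_{j\in S,\,\{i,j\}\in E}\pi(i)\pi(j)z_j^s=\lambda z_i^s\quad\text{for all } i\in S.
\end{align*}
Here I used $\pi(i)^{-1}=\pi(i)^{2s-1}$ together with $\pi(i)^2\cdot\pi(i)^{-2}=1$; more carefully, I will need $\pi(i)^{-1}=\overline{\pi(i)}$ and will pick the normalization so that the weight is exactly $\pi(i)\pi(j)$, possibly replacing $\pi$ by $\pi^{-1}$, which is also $2s$-valued. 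Since $(z_i^s)_{i\in S}$ is a nonzero vector, $\lambda$ is an eigenvalue of the $2s$-weighted induced subgraph $G_\pi[S]$.

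For the converse, let $\mathbf{y}$ be an eigenvector of $A(G_\pi[S])$ for $\lambda\ne0$. For $i\in S$ choose $z_i$ with $z_i^s=y_i$. Put $x_v=z_i$ for all but one vertex $v_i^*\in\mathbf{V}_i$, and put $x_{v_i^*}=\pi(i)z_i$, so that $P_i=\pi(i)y_i$. Set $x_v=0$ on $\mathbf{V}_i$ for $i\notin S$. I then verify the original equation in three cases. If $x_v=0$, both sides vanish by the first observation. If $x_v\ne0$, I multiply through by $x_v$, which is legitimate, and the check reduces to $P_i\sum_jP_j=\lambda x_v^{2s}$, i.e.\ to the linear eigen-relation times $\pi(i)y_i$. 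Since $x_v^{2s}=z_i^{2s}=y_i^2$ for all $v$, including $v_i^*$ because $\pi(i)^{2s}=1$, this matches.

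The main obstacle I expect is the bookkeeping of the roots of unity, namely getting the weight to be exactly $\pi(i)\pi(j)$ rather than $\pi(i)^{-1}\pi(j)$. The point is that the relation $P_i\sum_jP_j=\lambda x_v^{2s}$ is quadratic in the $P$'s. So I will write it as $\sum_j(P_i/z_i^s)(P_j/z_j^s)z_j^s=\lambda z_i^s\,(z_i^{2s}/z_i^{2s})$, which naturally produces the symmetric weights $\pi(i)\pi(j)$ and avoids any conjugation.
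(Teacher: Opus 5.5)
The paper gives no proof of this lemma; it is quoted from \cite{lin2026eigenvalue} (Theorem 3.1 there), so there is no in-paper argument to compare with. Your proof is correct as a self-contained argument. The following steps all check out:
\begin{itemize}
\item the reduction to $P_i\sum_{j\sim i}P_j=\lambda x_v^{2s}$;
\item the constancy of $x_v^{2s}$ on each block $\mathbf{V}_i$ when $\lambda\neq0$;
\item the vanishing of an entire block once one coordinate in it is zero, which is where $s\ge2$ enters;
\item the separate treatment of $\lambda=0$ via a vector supported on a single vertex;
\item the explicit lift in the converse, where $x_{v_i^*}^{2s}=z_i^{2s}$ because $\pi(i)^{2s}=1$.
\end{itemize}

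There is one slip in the forward direction. From $\pi(i)z_i^s\sum_{j\in S,\,j\sim i}\pi(j)z_j^s=\lambda z_i^{2s}$ you should divide by $z_i^s$, not by $\pi(i)z_i^s$. Dividing by $z_i^s$ gives $\sum_{j\in S,\,j\sim i}\pi(i)\pi(j)z_j^s=\lambda z_i^s$ at once, with exactly the weights $\pi(i)\pi(j)$. Your final paragraph already does this.

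Delete the remarks about $\overline{\pi(i)}$ and about possibly replacing $\pi$ by $\pi^{-1}$. The replacement is not harmless: $A(G_{\pi^{-1}})$ is the entrywise conjugate of $A(G_\pi)$, so its eigenvalues are the $\overline{\lambda}$. Asserting the eigen-relation for $\pi^{-1}$ would be unjustified.

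Also state two small points explicitly. First, $\pi$ is extended arbitrarily to $V\setminus S$ so that $G_\pi[S]$ is a genuine $2s$-weighted induced subgraph. Second, the lifted vector $\mathbf{x}$ in the converse is nonzero because some $y_i\neq0$.
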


Let $\Sigma=\{\sigma_{1}, \sigma_{2}, \ldots , \sigma_{\varsigma}\}$ be the set of $s$-th powers of the nonzero eigenvalues of all $2s$-weighted induced subgraphs of the graph $G$.
%Denote $\varsigma=|\Sigma|$,
%and we use $\sigma_{1}, \sigma_{2}, \ldots , \sigma_{\varsigma}$ to denote the elements of $\Sigma$.
By Lemma \ref{yinli9} and the $s$-symmetry of the spectrum of $G^{[s]}$ \cite{fan2019spectral},
its characteristic polynomial $\phi_{G^{[s]}}(\lambda)$ can be written as
\begin{align}\label{eq12}
\phi_{G^{[s]}}(\lambda)=\lambda^{\mu_{0}(s)}\prod_{i=1}^{\varsigma}\left(\lambda^{s}-\sigma_{i}\right)^{\mu_{i}(s)},
\end{align}
where $\mu_{i}(s)$ is the multiplicity of the factor $\lambda^{s}-\sigma_{i}$ for each $i \in [\varsigma]$.
Note that when $s=1$,
every $2$-weighted induced subgraph of $G$ is cospectral with its underlying induced subgraph.
In this case,
if $\sigma_{i} \in \Sigma$ is not an eigenvalue of $G$,
then $\lambda-\sigma_{i}$ is not a factor of $\phi_{G}(\lambda)$,
and thus we have $\mu_{i}(1)=0$.

Let $$\mathrm{M}=\begin{pmatrix} \sigma_{1} & \sigma_{2} & \cdots & \sigma_{\varsigma} \\ \sigma_{1}^{2} & \sigma_{2}^{2} & \cdots & \sigma_{\varsigma}^{2} \\ \vdots & \vdots & \ddots & \vdots \\ \sigma_{1}^{\varsigma} & \sigma_{2}^{\varsigma} & \cdots & \sigma_{\varsigma}^{\varsigma} \end{pmatrix}$$
be a $\varsigma \times \varsigma$ matrix consisting of the elements of $\Sigma$.
Since $\sigma_{1},\sigma_{2},\ldots,\sigma_{\varsigma}$ are distinct and nonzero,
$\mathrm{M}$ is invertible.
Let $\mu(s)=\left(\mu_{1}(s),\mu_{2}(s),\ldots,\mu_{\varsigma}(s)\right)^{\top}$ be the multiplicity vector of the factors,
and let $\mathrm{S}(s)=\left(\mathrm{S}_{s}(G^{[s]}),\mathrm{S}_{2s}(G^{[s]}),\ldots,\mathrm{S}_{\varsigma s}(G^{[s]})\right)^{\top}$ be the spectral moment vector of $G^{[s]}$.
From \eqref{eq12},
the spectral moments of $G^{[s]}$ can be written as $$\mathrm{S}_{\ell s}(G^{[s]})=\sum_{i=1}^{\varsigma}s\mu_{i}(s)\sigma_{i}^{\ell}$$ for all positive integers $\ell$.
Then we get the following system of equations
\begin{align}\label{eq13}
\mathrm{S}(s)=s\mathrm{M}\mu(s).
\end{align}

%Using the expression for the spectral moments of $G^{[s]}$ based on $s$-multiple closed walks (see \eqref{eq11}),
%we next give another expression for $\mathrm{S}(s)$ different from that in \eqref{eq13}. 
Next, 
we use \eqref{eq11} to express $\mathrm{S}(s)$ in terms of the numbers of covering $s$-multiple closed walks in connected subgraphs of $G$.
Let $\mathcal{G}$ be a set of representatives for the isomorphism classes of connected subgraphs of $G$.
Denote $\chi=|\mathcal{G}|$,
and we use $\widehat{G}_{1}=(\widehat{V}_{1},\widehat{E}_{1}), \ldots , \widehat{G}_{\chi}=(\widehat{V}_{\chi},\widehat{E}_{\chi})$ to denote the elements of $\mathcal{G}$.
Let $$\mathrm{P}(s)=\begin{pmatrix} p_{s,s}(\widehat{G}_{1}) & p_{s,s}(\widehat{G}_{2}) & \cdots & p_{s,s}(\widehat{G}_{\chi}) \\ p_{2s,s}(\widehat{G}_{1}) & p_{2s,s}(\widehat{G}_{2}) & \cdots & p_{2s,s}(\widehat{G}_{\chi}) \\ \vdots & \vdots & \ddots & \vdots \\ p_{\varsigma s,s}(\widehat{G}_{1}) & p_{\varsigma s,s}(\widehat{G}_{2}) & \cdots & p_{\varsigma s,s}(\widehat{G}_{\chi}) \end{pmatrix}$$
be a $\varsigma \times \chi$ matrix whose entries are the numbers of covering $s$-multiple closed walks in the corresponding graphs of $\mathcal{G}$.
For each $\ell \in [\varsigma]$ and each $i \in [\chi]$,
if $\widehat{G}_{i}$ does not contain covering $s$-multiple closed walks of length $\ell s$,
then $p_{\ell s,s}(\widehat{G}_{i})=0$,
which implies that $\left(\mathrm{P}(s)\right)_{\ell i}=0$.
In particular,
$\left(\mathrm{P}(s)\right)_{1i}=0$ for all $i \in [\chi]$.
Let $\mathrm{N}=\left(N_{G}(\widehat{G}_{1}),N_{G}(\widehat{G}_{2}),\ldots,N_{G}(\widehat{G}_{\chi})\right)^{\top}$ be the subgraph number vector of $G$.
Let $\mathrm{D}(s)$ be a $\chi \times \chi$ diagonal matrix whose diagonal entries are
$$\left(\mathrm{D}(s)\right)_{ii}=\frac{2^{|\widehat{V}_{i}|(s-1)}s^{|\widehat{V}_{i}|s-1}}{(2s-1)^{|\widehat{V}_{i}|s-1}}$$
for all $i \in [\chi]$.
From \eqref{eq11},
the $\ell s$-th order spectral moment of $G^{[s]}$ is $$\mathrm{S}_{\ell s}(G^{[s]})=(2s-1)^{|V|s-1}\sum_{i=1}^{\chi}\left(\mathrm{P}(s)\right)_{\ell i}\left(\mathrm{D}(s)\right)_{ii}N_{G}(\widehat{G}_{i}).$$
Then we get
\begin{align}\label{eq14}
\mathrm{S}(s)=(2s-1)^{|V|s-1}\mathrm{P}(s)\mathrm{D}(s)\mathrm{N}.
\end{align}

Combining \eqref{eq13} and \eqref{eq14},
we have the following expression for the multiplicity vector of the factors
$$\mu(s)=\frac{(2s-1)^{|V|s-1}}{s}\mathrm{M}^{-1}\mathrm{P}(s)\mathrm{D}(s)\mathrm{N}.$$
It is known that the degree of the characteristic polynomial of $G^{[s]}$ is $|V|s(2s-1)^{|V|s-1}$.
So the multiplicity $\mu_{0}(s)$ of the factor $\lambda$ in \eqref{eq12} is equal to $|V|s(2s-1)^{|V|s-1}$ minus the sum of all components of $s\mu(s)$.
Therefore,
we obtain the characteristic polynomial of $G^{[s]}$ as follows.

\begin{thm}\label{dingli2}
Let $G=(V,E)$ be a graph.
Then the characteristic polynomial of $G^{[s]}$ is
\begin{align*}
\phi_{G^{[s]}}(\lambda)=\lambda^{\mu_{0}(s)}\prod_{i=1}^{\varsigma}\left(\lambda^{s}-\sigma_{i}\right)^{\mu_{i}(s)},
\end{align*}
where $\mu_{i}(s)=\frac{(2s-1)^{|V|s-1}}{s}\left(\mathrm{M}^{-1}\mathrm{P}(s)\mathrm{D}(s)\mathrm{N}\right)_{i}$ for each $i \in [\varsigma]$
and $\mu_{0}(s)=|V|s(2s-1)^{|V|s-1}-s\sum_{i=1}^{\varsigma}\mu_{i}(s)$.
\end{thm}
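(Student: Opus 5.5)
The argument assembles the computations preceding the statement, and one point needs real care: that the form \eqref{eq12} is legitimate. I would first fix $s\ge 2$ and justify \eqref{eq12}. By Lemma \ref{yinli9}, every nonzero eigenvalue $\lambda$ of $G^{[s]}$ is an eigenvalue of some $2s$-weighted induced subgraph of $G$, so $\lambda^{s}\in\Sigma$. By the $s$-symmetry of the spectrum \cite{fan2019spectral}, the multiplicities of $\lambda$ and $\mathrm{e}^{2\pi\mathbf{i}/s}\lambda$ coincide. Hence the nonzero roots of $\phi_{G^{[s]}}$ group into full orbits $\{\lambda:\lambda^{s}=\sigma_i\}$ with a common multiplicity $\mu_i(s)\ge 0$. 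The factor $\lambda^{s}-\sigma_i$ has exactly these $s$ roots, each simple since $\sigma_i\neq0$. This gives $\phi_{G^{[s]}}(\lambda)=\lambda^{\mu_0(s)}\prod_i(\lambda^{s}-\sigma_i)^{\mu_i(s)}$, possibly with some $\mu_i(s)=0$. For $s=1$ I would note, as the paper does, that $\Sigma$ contains the nonzero eigenvalues of $G$ and we simply allow zero multiplicities.

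Next I would compute the power sums. Each factor $\lambda^{s}-\sigma_i$ contributes $\sum_{\lambda^s=\sigma_i}\lambda^{\ell s}=s\sigma_i^{\ell}$ to $\mathrm{S}_{\ell s}(G^{[s]})$, and the factor $\lambda^{\mu_0}$ contributes nothing. Summing gives $\mathrm{S}(s)=s\mathrm{M}\mu(s)$, which is \eqref{eq13}. On the other side, Theorem \ref{dingli1} (with $d=\ell s$) rewritten in matrix form gives \eqref{eq14}. I would check that the case distinction in \eqref{eq11} causes no trouble. For $\ell=1$ we have $d=s<2s$ and the moment is $0$. Correspondingly, the first row of $\mathrm{P}(s)$ vanishes, because a covering $s$-multiple closed walk of length $s$ would have to arrive at each of the at least two vertices of a connected subgraph with an edge $s$ times, which forces length at least $2s$. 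So \eqref{eq14} holds for every $\ell\in[\varsigma]$, including when $\mathcal{G}(d,s)$ is empty. One must also check that only subgraphs with $p_{d,s}>0$ contribute, which is automatic.

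The key step is inverting $\mathrm{M}$. It equals $\mathrm{diag}$-scaled Vandermonde: $\det\mathrm{M}=\prod_i\sigma_i\prod_{i<j}(\sigma_j-\sigma_i)\neq0$, because the $\sigma_i$ are distinct and nonzero. Equating \eqref{eq13} and \eqref{eq14} then gives $\mu(s)=\frac{(2s-1)^{|V|s-1}}{s}\mathrm{M}^{-1}\mathrm{P}(s)\mathrm{D}(s)\mathrm{N}$, which is the formula for each $\mu_i(s)$.

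Finally, $\mu_0(s)$ follows from the degree of the characteristic polynomial. For a $k$-order $n$-dimensional tensor this degree is $n(k-1)^{n-1}$ \cite{qi2005eigenvalues}. Here $n=|V|s$ and $k=2s$, so the degree is $|V|s(2s-1)^{|V|s-1}$, and subtracting $s\sum_i\mu_i(s)$ gives $\mu_0(s)$. The main obstacle is the first step: one must make sure that the $s$-symmetry statement is about algebraic multiplicities in the characteristic polynomial, which is how it is used. I would rely on the cited spectral symmetry result (which is formulated for the spectrum as a multiset), so that grouping into the factors $\lambda^s-\sigma_i$ with integer exponents is valid.
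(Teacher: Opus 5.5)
Your proposal is correct and follows essentially the same route as the paper. Both arguments write $\phi_{G^{[s]}}$ in the form \eqref{eq12} via Lemma \ref{yinli9} and $s$-symmetry, equate \eqref{eq13} with \eqref{eq14}, invert the scaled Vandermonde matrix $\mathrm{M}$, and recover $\mu_0(s)$ from the degree $|V|s(2s-1)^{|V|s-1}$. Your extra checks only spell out what the paper states briefly: the explicit $\det\mathrm{M}$, and the vanishing first row of $\mathrm{P}(s)$, which for $K_1$ holds trivially since it has no closed walks of positive length.
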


\begin{figure}[htbp]
\centering
\includegraphics[scale=0.56]{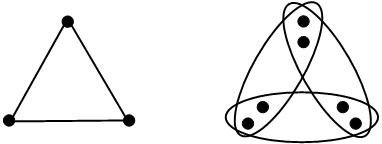}
\caption{The complete graph $K_3$ and its $2$-vertex expansion hypergraph.}
\label{fig1}
\end{figure}

\noindent{\textbf{Example.}}
Let $K_{3}$ be the complete graph with $3$ vertices.
We illustrate Theorem \ref{dingli2} using $K_{3}^{[2]}$ (see Figure \ref{fig1}) as an example.
By Lemma \ref{yinli9},
all eigenvalues of $K_{3}^{[2]}$ are $0$, $\pm 1$, $\pm \mathbf{i}$, $\pm 2$, $\frac{1\pm\sqrt{7}\mathbf{i}}{2}$, and $\frac{-1\pm\sqrt{7}\mathbf{i}}{2}$,
where $\mathbf{i}^{2}=-1$.
So we have $\Sigma=\left\{1, -1, 4, \frac{-3+\sqrt{7}\mathbf{i}}{2}, \frac{-3-\sqrt{7}\mathbf{i}}{2}\right\}$,
which implies that
\[
\mathrm{M} = \begin{pmatrix}
1 & -1 & 4 & \dfrac{-3+\sqrt{7}\mathbf{i}}{2} & \dfrac{-3-\sqrt{7}\mathbf{i}}{2} \\
1 & 1 & 16 & \dfrac{1-3\sqrt{7}\mathbf{i}}{2} & \dfrac{1+3\sqrt{7}\mathbf{i}}{2} \\
1 & -1 & 64 & \dfrac{9+5\sqrt{7}\mathbf{i}}{2} & \dfrac{9-5\sqrt{7}\mathbf{i}}{2} \\
1 & 1 & 256 & \dfrac{-31-3\sqrt{7}\mathbf{i}}{2} & \dfrac{-31+3\sqrt{7}\mathbf{i}}{2} \\
1 & -1 & 1024 & \dfrac{57-11\sqrt{7}\mathbf{i}}{2} & \dfrac{57+11\sqrt{7}\mathbf{i}}{2}
\end{pmatrix}.
\]
Note that $\mathcal{G}=\{K_{1},P_{1},P_{2},K_{3}\}$ and $\mathrm{N}=(3,3,3,1)^{\top}$,
where $K_{1}$ is the single vertex and $P_{\ell}$ is the path of length $\ell$.
By the definition of $\mathrm{P}(2)$ and Lemma \ref{yinli6},
we have $$\mathrm{P}(2)=\begin{pmatrix} 0 & 0 & 0 & 0 \\ 0 & 2 & 0 & 0 \\ 0 & 0 & 0 & 24 \\ 0 & 2 & 12 & 0 \\ 0 & 0 & 0 & 300 \end{pmatrix}.$$
We also have $\mathrm{D}(2)=\mathrm{diag}\left(\frac{4}{3},\frac{32}{27},\frac{256}{243},\frac{256}{243}\right)$.
By Theorem \ref{dingli2},
the characteristic polynomial of $K_{3}^{[2]}$ is
\begin{align*}
\phi_{K_{3}^{[2]}}(\lambda)=&\lambda^{498}\left(\lambda^{2}-1\right)^{208}\left(\lambda^{2}+1\right)^{48}\left(\lambda^{2}-4\right)^{32}\\
&\times\left(\lambda^{2}-\frac{-3+\sqrt{7}\mathbf{i}}{2}\right)^{96}\left(\lambda^{2}-\frac{-3-\sqrt{7}\mathbf{i}}{2}\right)^{96}.
\end{align*}

\section*{Acknowledgments}

The research of the second author is partially supported by the National Natural Science Foundation of China (No. 12371344, 12671402).

%\noindent\unnumbered{\textbf{Data Availability}}
%This manuscript has no associated data.

%\section*{Declarations}
%
%\textbf{Conflict of interest} The authors declare no conflict of interest to the content of this article.

\bibliographystyle{plain}
\bibliography{cpvebib}

\end{document}